\documentclass[11pt]{article}

\usepackage{amsmath, amsthm, amssymb}
\usepackage{enumitem}
\usepackage{mathtools}
\usepackage{etoolbox}
\usepackage[a4paper, margin=3cm]{geometry}
\usepackage{xcolor}
\usepackage[noadjust]{cite}
\usepackage{authblk}
\usepackage{nccmath}
\usepackage[normalem]{ulem}
\usepackage{hyperref}
\hypersetup{colorlinks=true, allcolors=teal}

\theoremstyle{plain}
\newtheorem{theorem}{Theorem}[section]
\newtheorem{definition}[theorem]{Definition}
\newtheorem{lemma}[theorem]{Lemma}
\newtheorem{proposition}[theorem]{Proposition}
\newtheorem{corollary}[theorem]{Corollary}

\theoremstyle{definition}
\newtheorem{example}[theorem]{Example}
\newtheorem*{remark}{Remark}

\setlist{noitemsep, topsep=1ex, parsep=1ex, partopsep=1ex}

\allowdisplaybreaks
\appto\normalsize{
	\abovedisplayskip=2ex plus 1ex minus 1ex
	\belowdisplayskip=2ex plus 1ex minus 1ex
	\abovedisplayshortskip=2ex plus 1ex minus 1ex
	\belowdisplayshortskip=2ex plus 1ex minus 1ex}
\appto\small{
	\abovedisplayskip=2ex plus 1ex minus 1ex
	\belowdisplayskip=2ex plus 1ex minus 1ex
	\abovedisplayshortskip=2ex plus 1ex minus 1ex
	\belowdisplayshortskip=2ex plus 1ex minus 1ex}

\AtBeginDocument{%
	\mathchardef\mathcomma\mathcode`\,
	\mathcode`\,="8000 
}
{\catcode`,=\active
	\gdef,{\mathcomma\discretionary{}{}{}}
}

\newcommand{\tr}{\operatorname{tr}}

\newcommand{\conv}{\operatorname{conv}}
\newcommand{\Ext}{\operatorname{Ext}}

\newcommand{\rank}{\operatorname{rank}}
\newcommand{\proj}{\operatorname{proj}}

\newcommand{\R}{\mathbb{R}}
\newcommand{\C}{\mathbb{C}}
\newcommand{\Tau}{{\cal T}}

\newcommand{\DS}{\operatorname{DS}}

\newcommand{\JF}{\operatorname{JF}}
\newcommand{\GJF}{\operatorname{GJF}}
\newcommand{\Rn}{\mathbb{R}^n}
\newcommand{\Rnp}{\mathbb{R}_+^n}
\newcommand{\Sn}{\mathcal{S}^n}
\newcommand{\Ln}{\mathcal{L}^n}

\newcommand{\Hn}{\mathcal{H}^n}
\newcommand{\V}{\mathcal{V}}
\newcommand{\W}{\mathcal{W}}

\newcommand{\one}{\mathbf{1}}
\newcommand{\lplus}{\Lambda_{+}}
\newcommand{\bfA}{\mathbf{A}}
\newcommand{\bfB}{\mathbf{B}}
\newcommand{\bfC}{\mathbf{C}}
\newcommand{\bfE}{\mathbf{E}}
\newcommand{\bfJ}{\mathbf{J}}
\newcommand{\bfr}{r}
\newcommand{\bfs}{s}
\newcommand{\bfe}{e}
\newcommand{\abs}[1]{\left\vert #1 \right\vert}
\newcommand{\norm}[1]{\left\Vert #1 \right\Vert}
\newcommand{\ip}[2]{\left< #1, #2 \right>}
\newcommand{\set}[2]{\left\lbrace #1 : #2 \right\rbrace}

\newcommand{\remove}[1]{\ifmmode\text{\sout{\ensuremath{#1}}}\else\sout{#1}\fi}

\title{On $e$-doubly stochastic matrices \\ over hyperbolic (polynomial) systems}

\author[1]{Juyoung Jeong} 
\author[2]{M. Seetharama Gowda}
\author[3]{Sudheer Shukla}

\affil[1]{\small Department of Mathematics\\Soongsil University\\Seoul 06978, South Korea}

\affil[2]{\small Department of Mathematics and Statistics\\University of Maryland Baltimore County\\Baltimore, Maryland 21250, United States}

\affil[3]{\small Department of Mathematics\\University of Maryland, College Park, Maryland 20742, United States}

\date{\today}

\begin{document}
	
\maketitle

\begin{abstract}
    In the setting of a hyperbolic (polynomial) system $(\V, p, e)$ of degree $n$, an $n$-tuple $\bfA = \big[ a_1, a_2, \dots, a_n \big]$ is said to be an $e$-doubly stochastic $\V$-matrix if each $a_i$ belongs to the hyperbolicity cone, has trace $1$, and $a_1 + a_2 + \dots + a_n=e$.
    In this article, we characterize linear preservers of such $\V$-matrices,  describe some connections between $e$-doubly stochasticity and majorization, and study extreme points of the set of all $e$-doubly stochastic $\V$-matrices. We show, for example, that $(i)$ when $n>1$, positive, unital, and trace-preserving transformations are (the only) linear transformations on $\V$ that preserve $e$-doubly stochasticity; $(ii)$ when $\bfA$ is $e$-doubly stochastic, the eigenvalue vector of the linear combination $\sum_{i=1}^{n} r_ia_i$ is majorized by the coefficient vector  $(r_1, r_2, \dots, r_n)^{T} \in \Rn$; and $(iii)$ when $p$ is complete, $e$-doubly stochastic $\V$-matrices induced by (generalized) Jordan frames are extreme points of the compact convex set of all $e$-doubly stochastic $\V$-matrices.
\end{abstract}

\vspace{2ex}

\noindent{\bf Key Words:} hyperbolic system, doubly stochastic transformation/matrix, majorization \\

\noindent{\bf AMS 2020 Subject Classification:} 15A86, 15B51, 17C20, 17C27, 52A20.

\section{Introduction}

In the classical setting of $\Rn$, a real $n \times n$ matrix $A$ is said to be doubly stochastic if it is  nonnegative and  each row/column of $A$ has sum $1$. Writing $A = \big[ a_1, a_2, \dots, a_n \big]$, where $a_1, a_2, \dots, a_n$ are the columns of $A$, these conditions could be written as
\begin{equation} \label{DS-matrix as e-DS}
    a_i \in \Rn_+,\;\; \tr(a_i)=1 \text{ for all } i = 1, 2, \dots, n, \;\; \text{and} \;\; a_1 + a_2 + \dots + a_n = \one,
\end{equation}
where $\Rn_+$ denotes the nonnegative orthant in $\Rn$, $\one$ denotes the column vector of ones, and $\tr(\bfr)$ denotes the trace of the column vector $\bfr$, namely, the sum of all entries of $\bfr$. If we view the matrix $A$ as a linear transformation on $\Rn$, then the doubly stochasticity property of $A$ reduces to the positive, unital, and trace-preserving properties:
\begin{equation} \label{DS-matrix via operator DS}
    A(\Rn_+) \subseteq \Rn_+,\;\; \, A \one = \one, \;\; \text{and} \;\; \tr(A \bfr) = \tr(\bfr) \text{ for all } \bfr \in \Rn.
\end{equation}
As is well-known, the doubly stochasticity property of a matrix is closely related to (classical) majorization on $\Rn$. Recall that for two given column vectors $\bfr, \bfs \in \Rn$, $\bfr$ is said to be majorized by $\bfs$, symbolically $\bfr \prec \bfs$, if for all $1 \leq k \leq n$, 
\[ \sum_{i=1}^{k} r_i^\downarrow \leq \sum_{i=1}^{k} s_i^\downarrow \;\; \text{and} \;\; \sum_{i=1}^{n} r_i^\downarrow = \sum_{i=1}^{n} s_i^\downarrow, \]
where $\bfr^\downarrow$ denotes the decreasing (non-increasing) rearrangement vector of $\bfr$, etc. In this setting, the results of Hardy-Littlewood-P{\'o}lya and Birkhoff \cite[Theorems II.1.9, II.1.10, II.2.3]{bhatia} assert the following: 
\begin{itemize}
    \item $\bfr \prec \bfs$ if and only if $\bfr$ is in the convex hull of all vectors obtained by permuting the coordinates of $\bfs$;
    \item $\bfr \prec \bfs$ if and only if $\bfr = D \bfs$ for some doubly stochastic matrix $D$;
    \item A real $n \times n$ matrix $A$ is doubly stochastic if and only if $A \bfr \prec \bfr$ for all $\bfr \in \Rn$; 
    \item The set of all doubly stochastic matrices is a compact convex set whose extreme points are permutation matrices.
\end{itemize}

In his study of mixed discriminants, Bapat \cite{bapat} introduces the concept of a doubly stochastic $n$-tuple consisting of $n\times n$ positive semidefinite matrices each with trace $1$ and total sum $I$ (the identity matrix). A similar concept appears in $C^{\ast}$-algebras \cite{alberti-uhlmann-paper, alberti-uhlmann}. Motivated by the concept of $e$-doubly stochastic $n$-tuple introduced by Gurvits \cite{gurvits1}, we study an analogue of \eqref{DS-matrix as e-DS} in hyperbolic (polynomial) systems, focusing on majorization and related issues. 
To set our framework, we first recall essential definitions from hyperbolic systems.

Let $\V$ be a real (nonzero) finite-dimensional vector space over $\R$ and $e$ be a nonzero element of $\V$. We consider a homogeneous polynomial $p$ of degree $n$ (relative to some fixed basis in $\V$) such that $p(e) \neq 0$ and for each $x \in \V$, the univariate polynomial $t \mapsto p(te - x)$ has (only) real zeros. Such a polynomial $p$ is said to be a \emph{hyperbolic polynomial in direction $e$} \cite{garding}. We call the triple $(\V, p, e)$ a \emph{hyperbolic system of degree $n$}. Now, for any $x \in \V$, we define the \emph{eigenvalue vector} $\lambda(x) = \big( \lambda_1(x), \lambda_2(x), \dots, \lambda_n(x) \big)^{T} \in \Rn$, where $\lambda_1(x) \geq \lambda_2(x) \geq \dots \geq \lambda_n(x)$ are the roots of $p(te - x) = 0$. The entries of $\lambda(x)$ are called the \emph{eigenvalues} of $x$, and the \emph{trace} of $x$ is defined as $\tr(x) \coloneq \sum_{i=1}^{n} \lambda_i(x)$. Then the \emph{eigenvalue map} $\lambda : \V \to \Rn$ induces the closed convex cone $\lplus$ with nonempty interior (under any norm on $\V$) \cite{garding, renegar}: 
\[ \lplus \coloneq \set{x \in \V}{\lambda(x) \geq 0}. \] 
It also induces  the following semi-inner product on $\V$ \cite{bauschke et al}:
\[ \ip{x}{y} \coloneq \frac{1}{4} \Big[ \norm{\lambda(x + y)}^2 - \norm{\lambda(x - y)}^2 \Big] \text{ for all } x, y \in \V. \]
As in \cite{bauschke et al}, we say that the hyperbolic system $(\V, p, e)$ is \emph{complete} (or $p$ is complete) if $\lambda(x) = 0$ only when $x = 0$. 

Two primary examples of complete hyperbolic systems are $(\Rn, p, \one)$, where $p(x) = x_1 x_2 \cdots x_n$ (product of the entries of a vector $x \in \Rn$) and $(\Hn, \det, I_{n})$, where $\Hn$ denotes the space of all $n \times n$ (complex) Hermitian matrices, $\det(X)$ is the determinant of $X \in \Hn$, and $I_{n}$ denotes the identity matrix. Going beyond these two examples, we can consider the system $(\V, \det, e)$ induced by a Euclidean Jordan algebra $\V$ of rank $n$ and unit $e$, where the underlying polynomial is the determinant. 

Now consider a hyperbolic system $(\V, p, e)$ of degree $n$. Given an $n$-tuple $(a_1, a_2, \dots, a_n) \in \V^n$, we write $\bfA = \big[ a_1, a_2, \dots, a_n \big]$ and call it a \emph{$\V$-matrix}. Following Gurvits \cite{gurvits1}, we say that $\bfA$ is \emph{$e$-doubly stochastic} if 
\begin{equation} \label{defn of e-DS}
    a_i \in \lplus, \;\; \tr(a_i) = 1 \text{ for all } i=1, 2, \dots, n, \;\; \text{and} \;\; a_1 + a_2 + \dots + a_n = e.
\end{equation}

In our previous paper \cite{gowda-jeong-shukla1}, some preliminary results on such matrices were described. Furthering this study, in the current paper, we raise the following questions and provide some answers:

\begin{itemize}
    \item[(Q1)] \textit{What are the linear preservers of $e$-doubly stochastic matrices?} By way of providing an answer, in Section 4, we consider linear transformations $T$ on $\V$ such that for every $e$-doubly stochastic $\V$-matrix $\bfA = \big[ a_1, a_2, \dots, a_n \big]$, the $\V$-matrix $\bfB = \big[ T(a_1), T(a_2), \dots, T(a_n) \big]$ is $e$-doubly stochastic. When $n > 1$, we characterize these as \emph{operator doubly stochastic transformations} on $\V$ defined by 
    \[ T(\lplus) \subseteq \lplus, \;\; T(e)=e, \;\; \text{and} \;\; \tr(T(x)) = \tr(x). \]
    Next, we consider real square matrices $D = \big[ d_{ij} \big]$ for which $\bfB \coloneq \big[ b_1, b_2, \dots, b_n \big]$, where $b_i = \sum_{j=1}^{n} d_{ij} a_j$, is $e$-doubly stochastic for every $e$-doubly stochastic $\V$-matrix $\bfA = \big[ a_1, a_2, \dots, a_n \big]$. While doubly stochasticity of $D$ is sufficient for this to hold, we show that this condition becomes necessary in the setting of Euclidean Jordan algebras.
    
    \item[(Q2)] \textit{How is $e$-doubly stochasticity related to majorization?} We answer this in Section 5. Given an $e$-doubly stochastic $\V$-matrix $\bfA = \big[ a_1, a_2, \dots, a_n \big]$, a vector $\bfr = (r_1, r_2, \dots, r_n)^{T} \in \Rn$, and an element $x \in \V$, we define $\bfA \bfr \coloneq \sum_{i=1}^{n} r_i a_i$ and $\bfA^{T} x \coloneq \big( \! \ip{a_1}{x}, \ip{a_2}{x}, \dots, \ip{a_n}{x} \! \big)^{T}$, and prove the following majorization results:
    \[ \lambda (\bfA \bfr) \prec \bfr \;\; \text{and} \;\; \bfA^{T} x \prec \lambda(x). \]
    As a consequence, answering a problem posed in \cite{gowda-jeong-shukla1}, we show that if $\bfA$ and $\bfB$ are $e$-doubly stochastic, then the linear transformation $\bfA \odot \bfB \coloneq \sum_{i=1}^{n} a_i \otimes b_i$ is $\lambda$-doubly stochastic, that is,
    \[ \lambda \Big( \sum_{i=1}^{n} \ip{b_i}{x} a_i \Big) \prec \lambda(x) \text{ for all } x \in \V. \]
    This result can be regarded as a generalization of Schur's theorem, which asserts that the diagonal of a Hermitian matrix is majorized by its eigenvalue vector.
    
    \item[(Q3)] \textit{When $p$ is complete, what are the extreme points of the compact convex set of all $e$-doubly stochastic $\V$-matrices?} We provide some partial answers in Section 6. By introducing the concept of a  (generalized) Jordan frame in a hyperbolic system, we show that the $\V$-matrices induced by (generalized) Jordan frames form a subset of the set of all extreme points, and in the setting of the Jordan spin algebra, these are the only extreme points.
\end{itemize}

\section{Preliminaries}

The real Euclidean space $\Rn$ (whose elements are viewed as column vectors) carries the usual inner product. In $\C^n$, for $u = (u_1, u_2, \ldots, u_n)^{T} \in \C^n$, $u^{\ast} = (\overline{u_1}, \overline{u_2}, \ldots, \overline{u_n})$ is the conjugate transpose of $u$, and the inner product is given by $\ip{u}{v} = v^{\ast}u$. For $\bfr, \bfs \in \Rn$, we use the majorization notation $\bfr \prec \bfs$ as defined in the Introduction. Also, we write $\bfr \leq \bfs$ when $\bfs - \bfr \in \Rn_+$.

\subsection{Hyperbolic systems} 

Throughout this paper, we consider a generic hyperbolic system $(\V, p, e)$ of degree $n$ with its hyperbolicity cone $\lplus$; see Section 1.
For $a, x, y \in \V$, we write, as in $\Rn$,
\[ a \geq 0 \text{ when } a \in \lplus \;\; \text{and} \;\; x \leq y  \text{ when } y - x \in \lplus. \]
Regarding the eigenvalue map $\lambda$, we frequently use the property
\[ \lambda(te + x) = t\one + \lambda(x) \text{ for all } t \in \R,\, x \in \V. \]

\begin{definition} 
    We say that the hyperbolic system $(\V, p, e)$ is \emph{complete} \textup{(}or $p$ is complete\textup{)} if the condition $\lambda(x) = 0 \Rightarrow x = 0$ holds.
\end{definition}

We remark that $p$ is complete if and only if $\lplus$ is pointed (in which case, $\lplus$ is a proper/regular cone).

\begin{proposition} \label{bauschke et al} 
    \textup{\cite[Theorem 4.2 and Proposition 4.4]{bauschke et al}} 
    Consider a hyperbolic system $(\V, p, e)$ of degree $n$. Then, corresponding to $\lambda : \V \to \Rn$, 
    \begin{equation} \label{ip}
        \ip{x}{y} \coloneq \frac{1}{4} \Big[ \norm{\lambda(x+y)}^2 - \norm{\lambda(x-y)}^2 \Big] \quad ( x, y \in \V)
    \end{equation}
    defines a bilinear map with $\ip{x}{x} \geq 0$ for all $x$; hence defines a semi-inner product and induces the semi-norm $\norm{x} \coloneq \sqrt{\ip{x}{x}}$. Moreover,
    \begin{equation} \label{semi-ftvn condition}
        \ip{x}{y} \leq \ip{\lambda(x)}{\lambda(y)} \leq \norm{x} \norm{y} \text{ for all } x, y \in \V.
    \end{equation}
    When $p$ is complete, \eqref{ip} defines an inner product.
\end{proposition}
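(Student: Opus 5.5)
The plan is to show that $q(x) \coloneq \norm{\lambda(x)}^2$ is a quadratic form on $\V$, so that \eqref{ip} is simply its polarization. Write $p(te - x) = p(e)\prod_{i=1}^n (t - \lambda_i(x)) = p(e)\sum_{k=0}^n (-1)^k \sigma_k(\lambda(x))\, t^{n-k}$, where $\sigma_k$ denotes the $k$-th elementary symmetric function. Since $p$ is homogeneous of degree $n$, expanding $p(te-x)$ in $t$ shows that the coefficient $\sigma_k(\lambda(x))$ is a homogeneous polynomial of degree $k$ in $x$. In particular $\tr(x) = \sigma_1(\lambda(x))$ is linear and $\sigma_2(\lambda(x))$ is a quadratic form in $x$. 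Hence
\[ q(x) = \tr(x)^2 - 2\sigma_2(\lambda(x)) \]
is a quadratic form. Its polarization $\tfrac14[q(x+y) - q(x-y)]$ is therefore a symmetric bilinear map, and it satisfies $\ip{x}{x} = q(x) \ge 0$. This gives the semi-inner product together with $\norm{x} = \norm{\lambda(x)}$.

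For the first inequality in \eqref{semi-ftvn condition}, bilinearity gives $\ip{x}{y} = \tfrac12[q(x+y) - q(x) - q(y)]$. The claim $\ip{x}{y} \le \ip{\lambda(x)}{\lambda(y)}$ is then equivalent to
\[ \norm{\lambda(x+y)}^2 \le \norm{\lambda(x) + \lambda(y)}^2 . \]
I will deduce this from the majorization $\lambda(x+y) \prec \lambda(x) + \lambda(y)$, because $\norm{\cdot}^2$ is Schur-convex. To get the majorization, I will use Gårding's theory: for each $k$, the map $x \mapsto \sum_{i=1}^k \lambda_i(x)$ is sublinear (convex and positively homogeneous) on $\V$. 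This gives $\sum_{i=1}^k \lambda_i(x+y) \le \sum_{i=1}^k \lambda_i(x) + \sum_{i=1}^k \lambda_i(y)$. The right-hand side is the sum of the $k$ largest entries of $\lambda(x)+\lambda(y)$, since both vectors are already sorted decreasingly. Equality at $k=n$ follows from linearity of the trace.

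The second inequality is Cauchy--Schwarz in $\Rn$ combined with $\norm{\lambda(x)} = \norm{x}$. Finally, when $p$ is complete, $\ip{x}{x} = 0$ forces $\lambda(x) = 0$ and hence $x = 0$, so the form is an inner product.

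I expect the main obstacle to be the convexity of the partial sums $\sum_{i\le k}\lambda_i$. For $k=1$ this is Gårding's classical result that $\lambda_1$ is convex. For general $k$, I would pass to the derived hyperbolic polynomials, or cite the known result (Bauschke et al., via Gårding) that the sum of the $k$ largest eigenvalues is sublinear. If that is unavailable, I would use Helton--Vinnikov restricted to the two-dimensional span of $x$ and $y$ together with $e$, which reduces the claim to the Hermitian Ky Fan inequality.
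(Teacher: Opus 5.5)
Your proposal is correct and is essentially the standard argument behind the cited result of Bauschke et al.\ (the paper itself gives no proof): $\norm{\lambda(x)}^2=\tr(x)^2-2\sigma_2(\lambda(x))$ is a quadratic form whose polarization is \eqref{ip}. The inequality $\ip{x}{y}\le\ip{\lambda(x)}{\lambda(y)}$ is then equivalent to $\norm{\lambda(x+y)}\le\norm{\lambda(x)+\lambda(y)}$, which follows from $\lambda(x+y)\prec\lambda(x)+\lambda(y)$ --- the majorization recorded in this paper as Theorem~\ref{gurvits result1}$(i)$, and exactly what your Helton--Vinnikov fallback produces.

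One clarification on your key lemma. The sublinearity of $\sum_{i\le k}\lambda_i$ comes from applying G{\aa}rding's convexity theorem to the hyperbolic polynomial $x\mapsto\prod_{|I|=k}\sum_{i\in I}\lambda_i(x)$, which is a polynomial in $x$ because it is symmetric in $\lambda(x)$. Its largest root is $\frac{1}{k}\sum_{i\le k}\lambda_i(x)$. Renegar's derivative polynomials would not serve here, since their roots only interlace the $\lambda_i(x)$.
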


Throughout the paper, for simplicity, we use the same (semi-)inner product notation in $\Rn$ as well as in $\V$. In a hyperbolic system $(\V, p, e)$, relative to the above semi-inner product, we define the \emph{trace} of an element $x$ in $\V$ by
\begin{equation} \label{trace}
    \tr(x) \coloneq \ip{x}{e} = \ip{\lambda(x)}{\one} = \text{sum of all the entries in $\lambda(x)$}. 
\end{equation}

We see that $x \mapsto \tr(x)$ is a linear functional and $\tr(e) = n$. Based on the above semi-inner product, we define the \emph{tensor product} of two elements $a, b \in \V$  as a linear transformation on $\V$:
\[ (a\otimes b)(x) \coloneq \ip{b}{x}a \text{ for all } x \in \V. \]
We define the \emph{dual} of the hyperbolicity cone $\lplus$:
\[ \lplus^{\ast} \coloneq \set{x \in \V}{\ip{x}{y} \geq 0, \,\, \forall\, y \in \lplus}. \]

\begin{proposition} \label{SG theorem} 
    \textup{\cite{gowda-jeong-shukla1}} Consider a hyperbolic system $(\V, p, e)$ and the corresponding semi-inner product as in \eqref{ip}. Then, for all $x, y \in \lplus$, we have $\ip{x}{y} \geq 0$. Thus,
    \[ \lplus \subseteq \lplus^{\ast}. \]
\end{proposition}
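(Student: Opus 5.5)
We must show that $\ip{x}{y} \geq 0$ whenever $x, y \in \lplus$. The plan is to combine the upper bound in \eqref{semi-ftvn condition} with the spectral structure of sums of cone elements. The sharper route uses convexity of the largest eigenvalue.

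First we reduce to an eigenvalue inequality. By the polarization formula \eqref{ip}, $4\ip{x}{y} = \norm{\lambda(x+y)}^2 - \norm{\lambda(x-y)}^2$. Therefore it suffices to show $\norm{\lambda(x-y)} \leq \norm{\lambda(x+y)}$ for $x, y \in \lplus$.

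Next we bound the eigenvalues of $x - y$ coordinatewise by those of $x + y$. Since $y \geq 0$, we have $x - y \leq x \leq x + y$ in the cone order. Since $x \geq 0$, we also have $-(x+y) \leq -y \leq x - y$. A standard fact from Gårding's theory, which we assume as part of the eigenvalue calculus, is that $\lambda$ is monotone with respect to $\lplus$: if $u \leq v$, then $\lambda_i(u) \leq \lambda_i(v)$ for every $i$. This follows from Weyl-type inequalities for hyperbolic polynomials, namely $\lambda_i(u + w) \geq \lambda_i(u) + \lambda_n(w)$ together with $\lambda_n(w) \geq 0$ for $w \in \lplus$. Applying it to the two chains gives
\[ \lambda_i(x-y) \leq \lambda_i(x+y) \quad \text{and} \quad \lambda_i(x-y) \geq \lambda_i(-(x+y)) = -\lambda_{n+1-i}(x+y). \]

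The main obstacle is that these coordinatewise bounds do not immediately compare Euclidean norms. All entries of $\lambda(x+y)$ are nonnegative, so each $\lambda_i(x-y)$ lies in $[-\lambda_{n+1-i}(x+y),\, \lambda_i(x+y)]$, and this alone does not give $\sum_i \lambda_i(x-y)^2 \leq \sum_i \lambda_i(x+y)^2$.

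To close the gap, we use the majorization theory available in hyperbolic systems. Lax's conjecture, now a theorem for the relevant rank-two restrictions, or equivalently the Gurvits/Bauschke et al. result, states that $\lambda(u + w) \prec \lambda(u) + \lambda(w)$. With $u = x$ and $w = -y$, this gives
\[ \lambda(x - y) \prec \lambda(x) - \lambda(y)^{\uparrow}. \]
Since $\norm{\cdot}^2$ is Schur-convex, we obtain $\norm{\lambda(x-y)}^2 \leq \norm{\lambda(x) - \lambda(y)^{\uparrow}}^2$. In the same way, and using the reverse inequality $\lambda(x+y) \succ \lambda(x) + \lambda(y)^{\uparrow}$ from Lidskii-type results, we get $\norm{\lambda(x+y)}^2 \geq \norm{\lambda(x) + \lambda(y)^{\uparrow}}^2$.

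Subtracting, we find that $4\ip{x}{y}$ is bounded below by $4\ip{\lambda(x)}{\lambda(y)^{\uparrow}}$. This quantity is nonnegative because both vectors have nonnegative entries. If the Lidskii-type lower bound turns out to be unavailable, the fallback is to restrict to the two-dimensional span of $x$ and $y$ (or of $e, x, y$) and invoke the Helton–Vinnikov representation. There $p$ becomes a determinant of symmetric matrices, $\lplus$ becomes a slice of the PSD cone, and the claim reduces to $\tr(XY) \geq 0$ for PSD matrices $X, Y$.
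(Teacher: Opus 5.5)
Your argument is correct, but it takes a much longer road than needed. The paper gives no proof here; it quotes the result from \cite{gowda-jeong-shukla1}. With the tools already stated, though, it is a one-liner. Apply the inequality $\ip{x}{y}\le\ip{\lambda(x)}{\lambda(y)}$ from \eqref{semi-ftvn condition} (Proposition \ref{bauschke et al}) to the pair $x,-y$. Since $\ip{x}{-y}=-\ip{x}{y}$ and $\lambda(-y)=-\lambda(y)^{\uparrow}$, where $\lambda(y)^{\uparrow}=(\lambda_n(y),\dots,\lambda_1(y))^{T}$, this gives $\ip{x}{y}\ge\sum_{i=1}^{n}\lambda_i(x)\lambda_{n+1-i}(y)$. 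The right-hand side is nonnegative when $x,y\in\lplus$. This is exactly the lower bound $\ip{x}{y}\ge\ip{\lambda(x)}{\lambda(y)^{\uparrow}}$ your main route reaches, obtained without any majorization.

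In your route, the upper majorization $\lambda(x-y)\prec\lambda(x)-\lambda(y)^{\uparrow}$ is Theorem \ref{gurvits result1}$(i)$. The Lidskii-type bound $\lambda(x)+\lambda(y)^{\uparrow}\prec\lambda(x+y)$, however, is only asserted. It does hold in hyperbolic systems: it involves only $\lambda(x)$, $\lambda(y)$, $\lambda(x+y)$, so it transfers from the matrix Lidskii theorem through the symmetric matrices of Theorem \ref{gurvits result1}. It is also unnecessary. Bilinearity of \eqref{ip} gives the parallelogram identity $\|\lambda(x+y)\|^2=2\|\lambda(x)\|^2+2\|\lambda(y)\|^2-\|\lambda(x-y)\|^2$, so the upper majorization alone yields $4\ip{x}{y}\ge 4\ip{\lambda(x)}{\lambda(y)^{\uparrow}}$.

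Your fallback is a complete proof on its own, and it is essentially Theorem \ref{gurvits result1]} applied directly. The representation must be taken on the span of $e,x,y$, as your parenthetical allows, not on the span of $x,y$ alone. If $\lambda(tx+sy)=\lambda(tA+sB)$ for all $t,s$, then $\ip{x}{y}=\tfrac14[\tr((A+B)^2)-\tr((A-B)^2)]=\tr(AB)$, with $A,B$ positive semidefinite.

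The trade-off between the two routes is this. The one-line route uses nothing beyond the Bauschke et al.\ inequality already stated. The matrix route needs the Helton--Vinnikov machinery, but it additionally identifies the semi-inner product with the trace inner product on each two-element slice.
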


\begin{proposition} \label{delta is compact convex}
    Suppose $(\V, p, e)$ is a complete hyperbolic system. Then the set
    \[ \Delta \coloneq \set{x \in \V}{x \geq 0,\, \tr(x) = 1} \]
    is nonempty, compact, and convex in $\V$. When $n > 1$ and $0 \leq b \leq e$ with $\tr(b) \leq n-1$, the set
    \[ \Delta(b) \coloneq \set{x \in \Delta}{0 \leq x \leq e - b} \]
    is also nonempty, compact, and convex in $\V$.
\end{proposition}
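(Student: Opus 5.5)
Nonemptiness of $\Delta$ is immediate: $e/n \in \lplus$ since $\lambda(e/n) = \frac{1}{n}\one \geq 0$, and $\tr(e/n) = 1$. Convexity follows because $\lplus$ is a convex cone (Section 1) and $\tr$ is linear. Closedness holds because $\lplus$ is closed and $\tr$, being linear on a finite-dimensional space, is continuous. The substantive point is boundedness, and this is where completeness enters.

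For boundedness I would argue by the inner product. Since $p$ is complete, Proposition \ref{bauschke et al} says $\ip{\cdot}{\cdot}$ is an inner product, so $\norm{\cdot}$ is a genuine norm on $\V$. For $x \in \Delta$ we have $\lambda(x) \geq 0$ and $\sum_i \lambda_i(x) = \tr(x) = 1$, so each $\lambda_i(x) \in [0,1]$. Hence
\[ \norm{x}^2 = \ip{x}{x} = \tfrac14\big[\norm{\lambda(2x)}^2 - \norm{\lambda(0)}^2\big] = \norm{\lambda(x)}^2 \leq 1. \]
So $\Delta$ lies in the unit ball of a norm on $\V$. All norms on $\V$ are equivalent, so $\Delta$ is bounded, and therefore compact. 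I expect this to be the main step, and the only place completeness is needed. Without completeness $\lplus$ may contain a line, and then $\Delta$ is unbounded.

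For $\Delta(b)$, write $\Delta(b) = \Delta \cap \lplus \cap (e - b - \lplus)$. This is an intersection of closed convex sets with the compact set $\Delta$, so it is compact and convex. It remains to show nonemptiness, and I would build an explicit element $x = t(e-b)$. Since $b \leq e$ we have $e - b \geq 0$, hence $x \geq 0$ for $t \geq 0$. Also $\tr(e-b) = n - \tr(b) \geq 1$ by hypothesis, so $t \coloneq 1/\tr(e-b) \in (0,1]$ is well defined and gives $\tr(x) = 1$. Finally $e - b - x = (1-t)(e-b) \in \lplus$ because $1 - t \geq 0$. So $x \in \Delta(b)$, which finishes the proof.
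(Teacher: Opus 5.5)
Your proof is correct and follows essentially the same route as the paper. Both arguments use $\frac{1}{n}e$ for nonemptiness, convexity and closedness of $\lplus$ together with linearity of the trace, boundedness via $\norm{x}=\norm{\lambda(x)}\leq \tr(x)=1$ under the inner product that completeness provides, and the explicit element $\frac{1}{\tr(e-b)}(e-b)\in\Delta(b)$; you additionally spell out two small steps the paper leaves implicit, namely the identity $\ip{x}{x}=\norm{\lambda(x)}^2$ and the check that $e-b-x=(1-t)(e-b)\geq 0$.
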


\begin{proof}
    Clearly, $\frac{1}{n}e \in \Delta$. As $\lplus$ is convex and the trace functional is linear, $\Delta$ is convex. Moreover, since $p$ is complete (by our assumption), the trace functional is continuous with respect to the norm induced by the inner product \eqref{ip}. Since $\lplus$ is closed, $\Delta$ is also closed. Now the boundedness of $\Delta$ follows from the inequalities
    \[ \norm{x} = \norm{\lambda(x)} \leq \sum_{i=1}^{n} \abs{\lambda_i(x)} = \tr(x) = 1, \]
    where the first equality is due to the norm-preserving property of $\lambda$ (see \eqref{semi-ftvn condition}) and the second equality is due to the nonnegativity of $\lambda(x)$. As $\Delta$ is closed, convex, and bounded, and $\V$ is finite-dimensional, we see that $\Delta$ is compact and convex.
    
    Now, assume the conditions imposed on $n$ and $b$. Clearly, $\Delta(b)$ is convex; it is closed (due to the closedness of $\lplus$). As $n > 1$, we have $\delta \coloneq \tr(e - b) \geq 1$ and so $\frac{1}{\delta}(e-b) \in \Delta(b)\subseteq \Delta$. Thus, $\Delta(b)$ is nonempty, compact, and convex.
\end{proof}

\begin{remark}
    \textit{If $n=1$ and $p$ is complete, then $\dim(\V)=1$ and $\Delta=\{e\}$.} This can be seen by noting that when $n=1$, $\tr \!\big( x - \tr(x) e \big) = 0$ for every $x \in \V$.
\end{remark}

\begin{remark}
    Suppose $\dim(\V) = 1$. It is easy to see that $p$ is complete and $\Delta = \{ \frac{1}{n} e \}$. Conversely, suppose $\Delta = \{ \frac{1}{n} e \}$. Then, for any $a \in \V$, we can find $\varepsilon > 0$ such that $a + \varepsilon e > 0$. With $\delta \coloneq \tr(a + \varepsilon e) > 0$, we have $\frac{1}{\delta} \tr(a + \varepsilon e ) =1$ and so $\frac{1}{\delta}(a + \varepsilon e) \in \Delta = \{\frac{1}{n}e\}$. Consequently, $a$ is a multiple of $e$. Hence, $\dim(\V) = 1$. We conclude: 
    \begin{center} 
        $\dim(\V)=1$ if and only if $\Delta = \{ \frac{1}{n}e \}$.
    \end{center}
\end{remark}

We now recall an important result due to  Gurvits \cite[Proposition 1.2]{gurvits1}. This result allows us to make an eigenvalue statement for two elements in a hyperbolic system by knowing a similar statement for two real symmetric matrices. In the following, for a real symmetric matrix $X$, $\lambda(X)$ denotes its vector of eigenvalues written in decreasing order. 

\begin{theorem} \label{gurvits result1} 
    Consider a hyperbolic system $(\V, p, e)$ of degree $n$. Given $a, b \in \V$, there exist $n \times n$ real symmetric matrices $A$ and $B$ such that
    \[ \lambda(t a + s b) = \lambda(t A + s B) \text{ for all } t, s \in \R. \]
    As a consequence, for any $a, b \in \V$,
    \begin{itemize}
        \item[$(i)$] $\lambda(a + b) \prec \lambda(a) + \lambda(b)$, 
        \item[$(ii)$] $a \leq b$ in $\V$ implies $\lambda(a) \leq \lambda(b)$ in $\Rn$. 
    \end{itemize}
\end{theorem}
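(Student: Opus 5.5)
The main claim is Gurvits' representation: for fixed $a,b\in\V$ there are real symmetric $A,B$ with $\lambda(ta+sb)=\lambda(tA+sB)$ for all $t,s$. The two consequences will then follow from the corresponding matrix facts.

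For the representation I will use the Helton--Vinnikov theorem (the Lax conjecture). Restrict $p$ to the three-dimensional subspace spanned by $e,a,b$; if $e,a,b$ are linearly dependent, a smaller-dimensional version handles it. Consider the trivariate form $q(x_0,x_1,x_2)\coloneq p(x_0e + x_1a + x_2b)$. This is homogeneous of degree $n$, has $q(1,0,0)=p(e)\neq 0$, and is hyperbolic in direction $(1,0,0)$: for each fixed $(x_1,x_2)$, the polynomial $t\mapsto q(t,-x_1,-x_2)=p(te-(x_1a+x_2b))$ has only real roots. Normalize so that $q(1,0,0)=1$. Helton--Vinnikov then gives real symmetric $n\times n$ matrices $A,B$ with
\[ q(x_0,x_1,x_2)=\det(x_0I + x_1A + x_2B). \]
Substituting $x_0=\tau$, $x_1=-t$, $x_2=-s$ yields $p(\tau e-(ta+sb))=\det(\tau I-(tA+sB))$. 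The two sides are equal as polynomials in $\tau$, so their roots agree with multiplicity, and hence $\lambda(ta+sb)=\lambda(tA+sB)$. The invocation of Helton--Vinnikov is the real content and the main obstacle. The only things to check are the degree and sign conventions, and that degenerate cases (for example $b\in\operatorname{span}\{e,a\}$) are handled, either by allowing $B$ to be a suitable combination or by applying the bivariate version via the spectral theorem for one matrix.

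For $(i)$, take $t=s=1$, and also $(t,s)=(1,0)$ and $(0,1)$. This gives $\lambda(a+b)=\lambda(A+B)$, $\lambda(a)=\lambda(A)$ and $\lambda(b)=\lambda(B)$. Ky Fan's inequality for symmetric matrices says $\lambda(A+B)\prec\lambda(A)+\lambda(B)$ (Bhatia, Corollary III.2.2), and $(i)$ follows.

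For $(ii)$, apply the representation to the pair $(a,\,b-a)$. The condition $b-a\in\lplus$ means $\lambda(b-a)\ge 0$, so the matrix $C$ representing $b-a$ is positive semidefinite. Then $\lambda(b)=\lambda(A+C)\ge\lambda(A)=\lambda(a)$ coordinatewise, by Weyl's monotonicity theorem.
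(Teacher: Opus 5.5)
Your proof is correct and follows essentially the same route as the paper, which does not prove this statement but cites Gurvits's Proposition 1.2; that result rests on the Helton--Vinnikov (Lewis--Parrilo--Ramana) determinantal representation of the trivariate hyperbolic form $q(x_0,x_1,x_2)=p(x_0e+x_1a+x_2b)$, and items $(i)$ and $(ii)$ then follow from Ky Fan's majorization and Weyl's monotonicity theorem for symmetric matrices, just as you argue. Your concern about $e,a,b$ being linearly dependent is unnecessary: $q$ is still a degree-$n$ form in three variables that is hyperbolic in direction $(1,0,0)$, so Helton--Vinnikov applies to it directly.
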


Note: Item $(i)$ above extends to any finite set of objects.

\subsection{Euclidean Jordan algebras}

Let $(\V, \circ, \ip{\cdot}{\cdot})$ be a Euclidean Jordan algebra of rank $n$ with unit $e$ \cite{faraut-koranyi, gowda-sznajder-tao}. We write $\Sn$ ($\Hn$) for the Euclidean Jordan algebra of all $n \times n$ real symmetric (respectively, complex Hermitian) matrices. In the algebra $\V$, a nonzero element $c$ is an \emph{idempotent} if $c\circ c=c$; an idempotent is a \emph{primitive idempotent} if it cannot be written as a sum of two other idempotents. By the spectral theorem, every element $x \in \V$ is a linear combination of $n$ mutually orthogonal primitive idempotents. (Here, the coefficients are called the eigenvalues of $x$, and the number of nonzero eigenvalues is the rank of $x$).

The polynomial $p(x) \coloneq \det(x)$, the product of the eigenvalues of $x$, defines a complete hyperbolic polynomial of degree $n$ in the direction $e$. So, $(\V, \det, e)$ becomes the corresponding hyperbolic system in which the inner product (\ref{ip}) reduces to the \emph{trace inner product} $\ip{x}{y} = \tr(x \circ y)$. In this setting, the cone of squares, $\set{x \circ x}{x \in \V}$, is the hyperbolicity cone of $(\V, \det, e)$. We use the same notation in both the algebra and its corresponding hyperbolic system. As in Proposition \ref{delta is compact convex}, consider the compact convex set
\[ \Delta = \set{x \in \V}{x \geq 0, \tr(x) = 1}. \]

It is known \cite[Proposition IV.3.2]{faraut-koranyi} that the set of all extreme points of $\Delta$ is given by
\begin{equation} \label{eq: extreme of primitive idempotents}
    \Ext(\Delta) = \Tau\coloneq\text{the set of all primitive idempotents}.
\end{equation}
Given an idempotent $c \neq e$ with $\rank(c) = k$ and $\gamma \in \{ 0, \frac{1}{2}, 1 \}$, let
\[ \V(c, \gamma) = \set{x \in \V}{x \circ c = \gamma x}. \] 
Then, by the Peirce (orthogonal) decomposition theorem \cite[page 62]{faraut-koranyi}, we have
\[ \V = \V(c, 0) \oplus \V(c, \tfrac{1}{2}) \oplus \V(c, 1). \] 
Moreover, $\V(c, 1)$ and $\V(c, 0)$ are subalgebras of $\V$ of rank $k$ and $n-k$, respectively.

\begin{lemma}\label{EJA lemma}
    In a Euclidean Jordan algebra, consider an idempotent $c \neq e$. Then
    \[ \set{x \in \Delta}{0 \leq x \leq e-c} = \Delta \cap \V(c, 0). \]
\end{lemma}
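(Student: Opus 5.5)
We prove the two inclusions separately; the work is in showing that $0 \leq x \leq e-c$ forces $x$ into the Peirce space $\V(c,0)$.

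\emph{The easy inclusion.} Take $x \in \Delta \cap \V(c,0)$, so $x \geq 0$. Since $\V(c,0)$ is a subalgebra with unit $e-c$, the spectral theorem in $\V(c,0)$ gives $x = \sum_j \mu_j f_j$, where $\mu_j \geq 0$ and the $f_j$ form a Jordan frame of $\V(c,0)$ with $\sum_j f_j = e-c$. The trace condition gives $\sum_j \mu_j = \tr(x) = 1$, so each $\mu_j \leq 1$. Hence
\[ e - c - x = \sum_j (1-\mu_j) f_j \]
is a nonnegative combination of orthogonal idempotents, and therefore lies in the cone of squares. Thus $0 \leq x \leq e-c$.

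\emph{The reverse inclusion.} Suppose $x \in \Delta$ and $0 \leq x \leq e-c$.
\begin{itemize}
    \item Apply the inner product with $c$. By Proposition \ref{SG theorem}, $c \geq 0$ and $x \geq 0$ give $\ip{x}{c} \geq 0$.
    \item Similarly, $e-c-x \geq 0$ and $c \geq 0$ give $\ip{e-c-x}{c} \geq 0$.
    \item Since $\ip{e-c}{c} = \tr\big((e-c)\circ c\big) = \tr(0) = 0$, the second inequality reads $-\ip{x}{c} \geq 0$.
    \item Combining, $\ip{x}{c} = \tr(x \circ c) = 0$.
\end{itemize}

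We then use the standard Euclidean Jordan algebra fact that for $u, v \geq 0$, $\tr(u \circ v) = 0$ implies $u \circ v = 0$. One proof: $\tr(u\circ v) = \ip{P(u^{1/2})v}{e}$ and $P(u^{1/2})v \geq 0$, so $P(u^{1/2})v = 0$. Then $\ip{P(v^{1/2})u}{e} = 0$ gives $\|P(u^{1/2})^{\cdot}\|$-type vanishing, which leads to $u^{1/2}\circ v^{1/2} = 0$ and hence $u \circ v = 0$. Alternatively, cite \cite{gowda-sznajder-tao} directly. Applying this with $u = x$ and $v = c$ gives $x \circ c = 0$, that is, $x \in \V(c,0)$, as required.

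\emph{Main obstacle.} The only nonroutine step is the orthogonality fact "$\tr(u\circ v)=0$ with $u, v \geq 0$ implies $u\circ v=0$". We will cite it as a known result rather than reprove it. Everything else uses only Proposition \ref{SG theorem}, the identity $c \circ (e-c) = 0$, and the spectral theorem in the subalgebra $\V(c,0)$.
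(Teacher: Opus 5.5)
Your proof is correct and follows essentially the same route as the paper. For $\subseteq$ you obtain $0 \le \ip{x}{c} \le \ip{e-c}{c} = 0$ and then use the standard fact that $\ip{u}{v}=0$ with $u,v\ge 0$ forces $u\circ v=0$; the paper also uses this without proof, so citing it is fine, though your inline sketch of it is not a complete argument. For $\supseteq$ you use the spectral decomposition of $x$ inside $\V(c,0)$, and your formula $e-c-x=\sum_j(1-\mu_j)f_j$ is a slightly more direct packaging of the paper's observation that $x$ is a convex combination of primitive idempotents $d\in\V(c,0)$, each satisfying $c+d\le e$.
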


\begin{proof}
    Let $x \in \Delta$ and $0 \leq x \leq e - c$. Then, $0 \leq \ip{x}{c} \leq \ip{e - c}{c} = 0$; thus $\ip{x}{c} = 0$. As $x \geq 0$ and $c \geq 0$, we have $x \circ c = 0$. So, $x \in \V(c, 0)$. Consequently, $x \in \Delta \cap \V(c, 0)$.
    
    Going the other way, let $y \in \Delta \cap \V(c, 0)$. We only need to show that $y \leq e-c$. Now, for every primitive idempotent $d$ in $\V(c, 0)$, we have $c \circ d = 0$ and so $c + d$ is an idempotent in $\V$; hence $c + d \leq e$. (Note that every idempotent in $\V$ is less than or equal to $e$.) As $y \in \V(c, 0)$ with $y \geq 0$ and $\tr(y) = 1$, thanks to the spectral theorem, we can write $y$ as a convex combination of primitive idempotents, (such as $d$) in $\V(c, 0)$. This means that $y + c \leq e$, or equivalently, $0 \leq y \leq e - c$.
\end{proof}

\subsection{Doubly stochastic transformations} 

Motivated by the results of Hardy-Littlewood-P{\'o}lya and \eqref{DS-matrix via operator DS}, we define several types of doubly stochastic linear transformations in the setting of a hyperbolic system $(\V, p, e)$ of degree $n$. For a linear transformation $T : \V \to \V$, we say that 
\begin{itemize}
    \item[$(a)$] $T$ is \emph{convex-hull doubly stochastic} if $T(x) \in \conv\,[x]$ for all $x\in \V$, where $[x] \coloneq \set{y \in \V}{\lambda(y) = \lambda(x)}$ denotes the $\lambda$-orbit of $x$ and `$\conv$' is an abbreviation for the convex hull; 
    \item[$(b)$] $T$ is \emph{$\lambda$-doubly stochastic} if $\lambda(T(x)) \prec \lambda(x)$ for all $x \in \V$;
    \item[$(c)$] $T$ is \emph{operator doubly stochastic} if 
    \begin{equation} \label{Operator DS}
        T(\lplus) \subseteq \lplus, \;\; T(e)=e, \;\; \text{and} \;\; \tr(T(x)) = \tr(x) \text{ for all } x \in \V.
    \end{equation}
\end{itemize}

The above concepts were introduced and studied in \cite{gowda-jeong-shukla2}. In particular, the following implications were shown (using $\DS$ as an abbreviation for `doubly stochastic'):
\begin{equation} \label{convex to lambda to operator ds}
    \text{convex-hull DS} \implies \text{$\lambda$-DS} \underset{\text{$p$ complete}}{\implies} \text{operator DS}.
\end{equation}

We remark that these concepts coincide in the classical setting of the hyperbolic system $(\Rn, p, \one)$, where $p(x)$ is the product of all entries of the vector $x$. They also coincide in the setting of Euclidean Jordan algebras \cite{gowda-jeong-shukla2}.

\section{\texorpdfstring{$e$}{e}-doubly stochastic \texorpdfstring{$\V$}{V}-matrices}

Consider a hyperbolic system $(\V, p, e)$ of degree $n$ with the corresponding semi-inner product (\ref{ip}).

\begin{definition} \label{v-matrix defn}
    A \emph{$\V$-matrix} is an $n$-tuple $\bfA \coloneq \big[ a_1, a_2, \dots, a_n \big]$, where $a_i \in \V$ for $1 \leq i \leq n$. \textup{(}We
    then say that $a_1, a_2, \dots, a_n$ are the ``columns" of $\bfA$.\textup{)} Given $\bfA$, $x \in \V$, and $\bfr = (r_1, r_2, \dots, r_n)^{T} \in \Rn$, we define
    \begin{itemize}
        \item[] $\bfA \bfr \coloneq r_1 a_1 + r_2 a_2 + \dots + r_n a_n$ and 
        \item[] $\bfA^{T} x \coloneq \big(\! \ip{a_1}{x}, \ip{a_2}{x}, \dots, \ip{a_n}{x} \!\big)^{T}$.
    \end{itemize}
    If $\bfB = \big[ b_1, b_2, \dots, b_n \big]$ is another $\V$-matrix, we define
    \begin{itemize}
        \item[] $\bfA \ast \bfB \coloneq \big[ \! \ip{a_i}{b_j} \! \big]$ and $\bfA \odot \bfB \coloneq \sum_{i=1}^n a_i \otimes b_i$.
    \end{itemize}
    If $D = \big[ d_{ij} \big]$ is an $n \times n$ real matrix, we define
    \begin{itemize}
        \item[] $D \bfA \coloneq \big[ b_1, b_2, \dots, b_n \big]$, where $b_i \coloneq \sum_{j=1}^{n} d_{ij} a_j$ for all $1 \leq i \leq n$.
    \end{itemize}
\end{definition}

In the above,  $\bfA^{T}$ is used merely as a notation. We note that $\bfA \bfr \in \V$ and $\bfA^{T} x \in \Rn$; these two are related by 
\[ \ip{\bfA \bfr}{x} = \ip{\bfr}{\bfA^{T} x}, \]
where the (semi-)inner product on the left (right) is computed in $\V$ (respectively, in $\Rn$). We also note that $\bfA \ast \bfB$ is an $n \times n$ (Gram) matrix. The following are easy to verify: For any $n \times n$ real matrix $D$,
\[ (D \bfA) \ast \bfB = D (\bfA \ast \bfB), \]
and, for any $\bfr \in \Rn$ and $x \in \V$,
\[ (\bfA \ast \bfB) \bfr = \bfA^{T} (\bfB \bfr) \;\; \text{and} \;\; (\bfA \odot \bfB)(x) = \bfA (\bfB^{T}x). \]

\begin{definition} 
    A $\V$-matrix $\bfA = \big [ a_1, a_2, \dots, a_n \big ]$ is said to be \emph{invertible} if there is a $\V$-matrix ${\bf B} = \big[ b_1, b_2, \dots, b_n \big]$ such that $\bfA \ast \bfB = I_{n}$ (the $n\times n$ identity matrix). We say that $\bfB$ is an inverse of $\bfA$. 
\end{definition}

We observe that if $\bfA = \big[ a_1, a_2, \dots, a_n \big]$ is invertible, then $\{a_1, a_2, \dots, a_n\}$ is linearly independent. The converse holds when $p$ is complete. To see this, suppose $\{a_1, a_2, \dots, a_n\}$ is linearly independent. Since \eqref{ip} defines an inner product when $p$ is complete, the Gram matrix $\bfA \ast \bfA$ is invertible. Let $D$ be its inverse. Then 
\[ (D\bfA) \ast \bfA = D (\bfA \ast \bfA) = I_{n} \]
proves that $D\bfA$ is an inverse of $\bfA$.
\\

The following concept was introduced in \cite[Definition 2.4]{gurvits1}. See \cite{gowda-jeong-shukla1} for some properties.

\begin{definition}
    In a hyperbolic system $(\V,p,e)$ of degree $n$, a $\V$-matrix $\bfA = \big[ a_1, a_2, \dots, a_n \big]$ is said to be \emph{$e$-doubly stochastic} if \eqref{defn of e-DS} holds, i.e.,
    \[ a_i \in \lplus, \;\; \tr(a_i) = 1 \text{ for } i = 1, 2, \dots, n, \;\; \text{and} \;\; a_1 + a_2 + \dots + a_n = e. \]
\end{definition}

\begin{remark}
    If $\{a_1, a_2, \dots, a_k\}$ is a set of trace-one elements with sum $e$, then $k = n$. This can be seen by the linearity of the trace functional:
    \[ n = \tr(e) = \sum_{i=1}^{k} \tr(a_i)=k. \]
    This means that in the above definition of $e$-doubly stochasticity, we cannot replace $n$ (the degree of $p$) by another natural number.
    
    Note that if  $\bfA = \big[ a_1, a_2, \dots, a_n \big]$ is $e$-doubly stochastic, then any permutation of the columns of $\bfA$ yields another $e$-doubly stochastic $\V$-matrix.
\end{remark}

In the following, we describe several ways of constructing $e$-doubly stochastic $\V$-matrices.

\begin{example}\label{n=1 case}
    In any hyperbolic system $(\V, p, e)$ of degree $n$, 
    \[ \mathbf{E} = \Big[\, \frac{1}{n}e, \frac{1}{n}e, \dots, \frac{1}{n}e \,\Big] \] 
    is $e$-doubly stochastic. If $n = 1$, then $\bfA = [\,e\,]$ is the only $e$-doubly stochastic $\V$-matrix. 
\end{example}

\begin{example} \label{example-average}
    Consider a hyperbolic system $(\V,p,e)$ of degree $n$, where 
    \[ \dim(\V) > 1 \text{ and } n > 1.\] 
    As the trace functional is linear and $\tr(e) \neq 0$, the subspace $\set{x \in \V}{\tr(x) = 0}$ has co-dimension one; let $0 \neq a \in \set{x \in \V}{\tr(x) = 0}$ and choose $\varepsilon > 0$ so that
    \[ \lambda \Big( \frac{1}{n}e \pm \varepsilon a \Big) = \frac{1}{n} \one + \varepsilon \lambda(\pm a) \geq 0. \]
    Then $\bfA = \big[ \tfrac{1}{n}e + \varepsilon a, \tfrac{1}{n}e - \varepsilon a, \frac{1}{n}e , \dots, \tfrac{1}{n}e \,\big]$ is $e$-doubly stochastic and has at least two distinct entries.
\end{example}

\begin{example} \label{example-SG} 
    Consider a hyperbolic system $(\V, p, e)$ of degree $n$, where $n > 1$. Let $a \geq 0$ with $\tr(a) = 1$. Define
    \[ a_1 \coloneq a \;\; \text{and} \;\; a_k \coloneq \frac{1}{n-1}(e-a) \text{ for } k = 2, 3, \dots, n. \]
    Then $\bfA \coloneq \big[ a_1, a_2, \dots, a_n \big]$ is $e$-doubly stochastic. This is seen as follows. Since $a \geq 0$ and $\tr(a) = 1$, we have $0 \leq \lambda_i(a) \leq 1$ and $\lambda_i(e - a) = 1 - \lambda_{n-i+1}(a) \geq 0$ for all $i$. This implies that $e - a \geq 0$. As $\tr(e-a)=n-1$ and $n > 1$,  we see that $e - a \neq 0$. Clearly, $a_i \geq 0$, $\tr(a_k) = 1$ for all $k$, and $a_1 + a_2 + \dots + a_n = e$. We conclude that $\bfA$ is $e$-doubly stochastic. 
\end{example}

\begin{proposition} \label{prop:construction of e-DS}
    Consider a hyperbolic system $(\V, p, e)$ of degree $n$, where $p$ is complete and $n > 1$. Let $\bfA = \big[ a_1, a_2, \dots, a_n \big]$ be $e$-doubly stochastic. For $1 \leq k \leq n-1$, define
    \[ \Delta_k \coloneq \Delta(a_1 + a_2 + \dots + a_k) = \set{x \in \Delta}{0 \leq x \leq e - (a_1 + a_2 + \dots + a_k)}. \]
    Then,
    \begin{itemize}
        \item[$(i)$] $a_{k+1} \in \Delta_{k}$ and $\Delta_k$ is a compact convex set;
        \item[$(ii)$] $\Delta \supseteq \Delta_1 \supseteq \dots \supseteq \Delta_{n-1}$;
        \item[$(iii)$] $\Delta_{n-1}$ is a singleton set.
    \end{itemize}
\end{proposition}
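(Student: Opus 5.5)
The plan is to reduce everything to Proposition \ref{delta is compact convex} by setting $b_k \coloneq a_1 + a_2 + \dots + a_k$ for $1 \leq k \leq n-1$ and checking that $b_k$ satisfies its hypotheses, namely $0 \leq b_k \leq e$ and $\tr(b_k) \leq n-1$.

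\emph{Step 1: the hypotheses on $b_k$.} Since $\lplus$ is a convex cone and each $a_i \geq 0$, we get $b_k \geq 0$. Also
\[ e - b_k = a_{k+1} + \dots + a_n \geq 0 \]
for the same reason, so $b_k \leq e$. Finally, by linearity of the trace, $\tr(b_k) = k \leq n-1$.

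\emph{Step 2: item $(i)$.} With these hypotheses, Proposition \ref{delta is compact convex} gives that $\Delta_k = \Delta(b_k)$ is nonempty, compact, and convex. For the membership claim, note that $a_{k+1} \geq 0$ and $\tr(a_{k+1}) = 1$, so $a_{k+1} \in \Delta$. Moreover,
\[ e - b_k - a_{k+1} = a_{k+2} + \dots + a_n \geq 0 \]
(this sum is empty, hence $0$, when $k = n-1$). Therefore $a_{k+1} \leq e - b_k$, and so $a_{k+1} \in \Delta_k$.

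\emph{Step 3: item $(ii)$.} The inclusion $\Delta_1 \subseteq \Delta$ holds by definition. For $\Delta_{k+1} \subseteq \Delta_k$, take $x \leq e - b_{k+1} = e - b_k - a_{k+1}$. Since $a_{k+1} \geq 0$, we get $e - b_k - x \geq a_{k+1} \geq 0$, so $x \leq e - b_k$.

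\emph{Step 4: item $(iii)$.} This is the only step needing completeness in an essential way. Here $e - b_{n-1} = a_n$. Let $x \in \Delta_{n-1}$, so $0 \leq x \leq a_n$ and $\tr(x) = 1 = \tr(a_n)$. Put $y \coloneq a_n - x$. Then $y \geq 0$ and $\tr(y) = 0$. Since $\lambda(y) \geq 0$ and its entries sum to $0$, we get $\lambda(y) = 0$. Completeness of $p$ then forces $y = 0$, that is, $x = a_n$. By $(i)$, $a_n \in \Delta_{n-1}$, so $\Delta_{n-1} = \{a_n\}$.
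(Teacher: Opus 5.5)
Your proof is correct and takes essentially the paper's route. You get compactness and convexity in (i) from Proposition \ref{delta is compact convex} via $\tr(a_1+\dots+a_k)=k\le n-1$, and in (iii) you apply completeness to the nonnegative, trace-zero element $e-(a_1+\dots+a_{n-1}+x)=a_n-x$. Beyond the paper, you also spell out the checks $0\le b_k\le e$, the nested inclusions in (ii), and the step from $\lambda(y)\ge 0$ and $\tr(y)=0$ to $\lambda(y)=0$, all of which the paper leaves implicit.
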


\begin{proof} 
    $(i)$ We fix an index $k$ with $1 \leq k \leq n-1$. Then $a_1 + a_2 + \dots + a_k + a_{k+1} \leq a_1 + a_2 + \dots + a_n = e$. Hence,
    \begin{equation} \label{useful inequality}
        0 \leq a_{k+1} \leq e - (a_1 + a_2 + \dots + a_k) \text{ for } 1 \leq k \leq n-1.
    \end{equation}
    This shows that $a_{k+1} \in \Delta_k$. Moreover, $\tr(a_1 + a_2 + \dots + a_k) = k \leq n-1$. Hence, by Proposition \ref{delta is compact convex}, $\Delta_k$ is compact and convex.
    
    $(ii)$ This is easy to verify.
    
    $(iii)$ We claim that if $x\in \Delta_{n-1}$, then $x = e -(a_1 + a_2 + \dots + a_{n-1})$. Suppose $x \in \Delta_{n-1}$ so that $x \in \Delta$ and $0 \leq x \leq e - (a_1 + a_2 + \dots + a_{n-1})$. Then $0 \leq a_1 + a_2 + \dots + a_{n-1} + x \leq e$ and $\tr \!\big( e - (a_1 + a_2 + \dots + a_{n-1} + x) \big) = n - n = 0$; hence, by the completeness of $p$, $a_1 + a_2 + \dots + a_{n-1}+x = e$, i.e., $x = e -(a_1 + a_2 + \dots + a_{n-1})$. 
\end{proof}

The above proposition motivates the following construction. 

\begin{example} \label{example via delta sets}
    Consider a hyperbolic system $(\V,p,e)$ of degree $n$, where $p$ is complete. By Proposition \ref{delta is compact convex}, $\Delta$ is nonempty, compact, and convex. When $n=1$, we have $\Delta=\{e\}$; in this case, $\bfA=[\, e \,]$ is $e$-doubly stochastic.
    
    Now let $n > 1$. We start with any  $a_1 \in \Delta$ and recursively construct $a_2, a_3, \dots, a_n$ as follows: For $1 \leq k \leq n-1$, suppose we have $a_1, a_2, \dots, a_{k} \in \Delta$ with 
    \[ \Delta_{k} \coloneq \Delta(a_1 + a_2 + \dots + a_k) = \set{x \in \Delta}{0 \leq x \leq e - (a_1 + a_2 + \dots + a_k)}. \]
    Since $\tr \!\big( e - (a_1 + a_2 + \dots + a_k) \big) = n-k \leq n-1$, by Proposition \ref{delta is compact convex}, $\Delta_k$ is nonempty (as well as compact and convex). We then choose (any) $a_{k+1} \in \Delta_{k}$. This process leads to the elements $a_1,a_2,\ldots, a_n$, where $a_n\in \Delta_{n-1}$. As $\Delta_{n-1}$ is a singleton set, see the proof of the previous proposition, we verify that
    $\bfA = \big[ a_1, a_2, \dots, a_n \big]$ is an $e$-doubly stochastic $\V$-matrix with $a_{k+1} \in \Delta_{k}$ for $1 \leq k \leq n-1$.
     
    {\it This example, together with Proposition \ref{prop:construction of e-DS}, shows that every $e$-doubly stochastic $\V$-matrix arises in this way.}
\end{example}

Our next example (obtained by specializing $a_k$ in the above construction) will be useful in describing the extreme points of the set of all $e$-doubly stochastic $\V$-matrices; see Theorem \ref{thm:doubly stochastic transformations} below. In what follows, $\Ext(S)$ denotes the set of all extreme points of a convex set $S$. We recall the well-known result of Minkowski; see \cite[Theorem 17.2]{rockafellar}: \textit{In a finite-dimensional space, every nonempty compact convex set is the convex hull of its extreme points.}

\begin{example} \label{example via extreme points}
    Consider a hyperbolic system $(\V,p,e)$ of degree $n$, where $p$ is complete. By Proposition \ref{delta is compact convex}, $\Delta$
    is nonempty, compact, and convex. Then, $\Ext(\Delta)$ is nonempty. When $n = 1$, $\Delta = \{e\}$, so $e \in \Ext(\Delta)$ and $\bfA = [\, e \,]$ is $e$-doubly stochastic. When $n > 1$, we start with (any) $a_1 \in \Ext(\Delta)$ and, for $1 \leq k \leq n-1$, we recursively construct $\Delta_k$ as in the previous example, and choose $a_{k+1} \in \Ext(\Delta_{k})$. Then $\bfA = \big[ a_1, a_2, \dots, a_n \big]$ is $e$-doubly stochastic.
\end{example}

Motivated by the above specialized construction, we introduce the following definition.

\begin{definition} 
    Consider a complete hyperbolic system $(\V, p, e)$ of degree $n$. A $\V$-matrix $\bfA = \big[ a_1, a_2, \dots, a_n \big]$ is said to be a \emph{generalized Jordan frame matrix} \textup{(}in short, a \emph{$\GJF$-matrix}\textup{)}, if there is a permutation $\sigma$ of $\{1, 2, \dots, n\}$ such that the $\V$-matrix $\big[ a_{\sigma(1)}, a_{\sigma(2)}, \dots, a_{\sigma(n)} \big]$ comes from the construction in Example \ref{example via extreme points}.
\end{definition}

In order to explain the appearance/usage of the term `Jordan frame,' we recall the following from \cite{gowda-jeong-shukla1}.

\begin{definition} \label{def:Jordan_frame} 
    Consider a hyperbolic system $(\V, p, e)$ of degree $n$. A set $\{c_1, c_2, \dots, c_n\}$ in $\V$  is said to be a \emph{Jordan frame} if $\lambda(c_i) = (1, 0, \dots, 0)^{T}$ for all $1 \leq i \leq n$ and $c_1 + c_2 + \dots + c_n = e$. The corresponding $\V$-matrix ${\bf J} = \big[ c_1, c_2, \dots, c_n \big]$ is called a \emph{Jordan frame matrix} \textup{(}in short, a \emph{$\JF$-matrix}\textup{)}.
\end{definition}

Regarding Jordan frames, we recall the following. 

\begin{proposition} \label{orthonormality of JF} \textup{\cite[Theorem 4.20]{gowda-jeong-shukla1}}
    Consider a hyperbolic system of degree $n$. Then, every Jordan frame is orthonormal with respect to the semi-inner product defined in \eqref{ip}, hence linearly independent. Consequently, if $\V$ has a Jordan frame, then $\dim(\V) \geq n$.
\end{proposition}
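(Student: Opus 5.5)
The plan is to get orthonormality straight from the definition \eqref{ip} of the semi-inner product, together with the nonnegativity result of Proposition \ref{SG theorem} and the trace identity \eqref{trace}. Linear independence and the dimension bound then follow by a routine argument.

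\emph{Step 1 (unit norms).} Put $x = y$ in \eqref{ip}. Since $\lambda(0)=0$ and $\lambda(2x) = 2\lambda(x)$ (the roots of $p(te-2x)$ are twice those of $p(te-x)$ by homogeneity), we get $\ip{x}{x} = \tfrac14\norm{\lambda(2x)}^2 = \norm{\lambda(x)}^2$. For a Jordan frame, $\lambda(c_i) = (1,0,\dots,0)^T$, so $\ip{c_i}{c_i} = 1$.

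\emph{Step 2 (orthogonality).} Each $c_i$ has nonnegative eigenvalues, so $c_i \in \lplus$. Proposition \ref{SG theorem} then gives $\ip{c_i}{c_j} \ge 0$ for all $i,j$. By \eqref{trace}, $\tr(c_i) = \ip{c_i}{e} = 1$, the sum of the entries of $\lambda(c_i)$. Expanding $e = \sum_j c_j$ by bilinearity gives
\[ 1 = \ip{c_i}{e} = \ip{c_i}{c_i} + \sum_{j\neq i}\ip{c_i}{c_j} = 1 + \sum_{j\ne i}\ip{c_i}{c_j}. \]
The off-diagonal terms are therefore nonnegative numbers summing to zero, so each is zero. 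Hence $\ip{c_i}{c_j} = \delta_{ij}$.

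\emph{Step 3 (independence and dimension).} Suppose $\sum_j t_j c_j = 0$. Pairing with $c_i$ uses only bilinearity, which Proposition \ref{bauschke et al} provides even when the form is only semi-definite, and gives $t_i = 0$. So $\{c_1,\dots,c_n\}$ consists of $n$ linearly independent vectors, and $\dim(\V)\ge n$.

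The only step needing care is Step 2. The argument relies on the nonnegativity $\ip{x}{y}\ge 0$ on $\lplus$ (Proposition \ref{SG theorem}), because the semi-inner product is not assumed positive definite and no Cauchy–Schwarz equality argument is available. Beyond that, I expect no real obstacle.
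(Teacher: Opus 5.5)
Your proof is correct and complete. Step 1 gives $\ip{c_i}{c_i}=\norm{\lambda(c_i)}^2=1$. In Step 2, $\tr(c_i)=\ip{c_i}{e}=\sum_j\ip{c_i}{c_j}$ forces the nonnegative off-diagonal terms $\ip{c_i}{c_j}$ ($j\neq i$) to vanish. In Step 3, linear independence needs only bilinearity, not definiteness.

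The paper does not prove this proposition itself; it imports it from \cite[Theorem 4.20]{gowda-jeong-shukla1}. So there is no in-paper argument to compare against. What matters is that you use only facts the paper states: bilinearity (Proposition \ref{bauschke et al}), $\tr(x)=\ip{x}{e}$ from \eqref{trace}, and nonnegativity of the form on $\lplus$ (Proposition \ref{SG theorem}). Your key step, nonnegative terms summing to zero, is the same device the paper uses in the proof of Proposition \ref{In EJA, JF=GJF}.

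For comparison, there is a route that avoids Proposition \ref{SG theorem} and uses Theorem \ref{gurvits result1} instead.
\begin{itemize}
\item Represent $c_i,c_j$ by $uu^{T}$ and $vv^{T}$ with $u,v$ unit vectors.
\item Since $e-c_i-c_j=\sum_{k\neq i,j}c_k\in\lplus$, item $(ii)$ of that theorem gives $1+\abs{u^{T}v}=\lambda_1(c_i+c_j)\leq 1$, so $u\perp v$.
\item Hence $\lambda(c_i+c_j)=(1,1,0,\dots,0)^{T}$ and $\lambda(c_i-c_j)=(1,0,\dots,0,-1)^{T}$ have equal norms, and polarization in \eqref{ip} gives $\ip{c_i}{c_j}=0$.
\end{itemize}
That route also yields spectral information about $c_i\pm c_j$. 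Yours is shorter and needs only that the semi-inner product is nonnegative on the cone.
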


\begin{proposition} 
    In a complete hyperbolic system, every $\JF$-matrix is a $\GJF$-matrix.
\end{proposition}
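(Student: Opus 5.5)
The plan is to show that the columns of a $\JF$-matrix, taken in their given order (so $\sigma$ is the identity), can be produced by the construction in Example \ref{example via extreme points}. Let $\bfJ = \big[ c_1, c_2, \dots, c_n \big]$ with $\{c_1, \dots, c_n\}$ a Jordan frame. The case $n=1$ is immediate: then $\lambda(c_1) = 1$ and $c_1 = e$, so $\bfJ = [\, e \,]$, which is the construction's output. For $n>1$ there are two things to verify:
\[ c_1 \in \Ext(\Delta) \quad\text{and}\quad c_{k+1} \in \Ext(\Delta_k) \ \text{for } 1 \le k \le n-1, \]
where $\Delta_k = \Delta(c_1 + \dots + c_k)$.

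\emph{Membership.} Since $\lambda(c_i) = (1,0,\dots,0)^T \ge 0$, each $c_i$ lies in $\lplus$ and has $\tr(c_i) = 1$, so $c_i \in \Delta$. Moreover,
\[ e - (c_1 + \dots + c_k) - c_{k+1} = c_{k+2} + \dots + c_n \ge 0, \]
so $0 \le c_{k+1} \le e - (c_1 + \dots + c_k)$, which gives $c_{k+1} \in \Delta_k$.

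\emph{Extremality.} It suffices to show that every $c_i$ is an extreme point of $\Delta$. Indeed, $\Delta_k \subseteq \Delta$, and an extreme point of $\Delta$ that lies in the subset $\Delta_k$ is automatically an extreme point of $\Delta_k$.

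To prove this, first use \eqref{ip} with $y = x$ to get $\norm{x}^2 = \tfrac14 \norm{\lambda(2x)}^2 = \norm{\lambda(x)}^2$. For $x \in \Delta$ this gives
\[ \norm{x} = \norm{\lambda(x)}_2 \le \norm{\lambda(x)}_1 = \tr(x) = 1, \]
while $\norm{c_i} = \norm{(1,0,\dots,0)^T} = 1$.

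Now suppose $c_i = t x + (1-t) y$ with $x, y \in \Delta$ and $0 < t < 1$. Then
\[ 1 = \norm{c_i} \le t\norm{x} + (1-t)\norm{y} \le 1, \]
so $\norm{x} = \norm{y} = 1$ and equality holds in the triangle inequality. Because $p$ is complete, Proposition \ref{bauschke et al} says \eqref{ip} is a genuine inner product. The induced norm is therefore strictly convex, and equality forces $x$ and $y$ to be nonnegative multiples of each other. Having equal norms, they are equal, so $x = y = c_i$. Hence $c_i \in \Ext(\Delta)$.

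The main point requiring care is this extremality step. It relies on completeness, which makes the semi-inner product a true inner product so that the equality case of the triangle inequality applies. Everything else is bookkeeping with the definitions.
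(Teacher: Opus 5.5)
Your proof is correct, and its skeleton is the paper's: you get membership $c_{k+1}\in\Delta_k$ from $c_{k+2}+\dots+c_n\ge 0$, and extremality in $\Delta_k$ is inherited from extremality in $\Delta$. The difference is how you obtain $c_i\in\Ext(\Delta)$.

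The paper cites \cite[Proposition 3.8]{gowda-jeong-shukla1}, that a rank-one element of $\lplus$ is an extreme direction of $\lplus$, and then normalizes by the trace. You argue metrically instead. Since $\norm{x}=\norm{\lambda(x)}$, the set $\Delta$ lies in the closed unit ball of the inner product of Proposition \ref{bauschke et al}, and each $c_i$ lies on the unit sphere. Strict convexity of an inner-product norm then rules out any nontrivial convex decomposition of $c_i$ inside $\Delta$; this is exactly where completeness enters.

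Your route is self-contained: it reuses only the estimate already in the proof of Proposition \ref{delta is compact convex}. It also shows slightly more: the norm-one points of $\Delta$ are exactly those with $\lambda(x)=(1,0,\dots,0)^{T}$, and all of them are extreme. Because completeness makes $\tr$ strictly positive on $\lplus\setminus\{0\}$, $\Delta$ is a base of $\lplus$. Your argument therefore recovers the cited extreme-direction fact for rank-one elements in the complete case. The paper's version is shorter on the page but depends on that external result.
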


\begin{proof}
    Suppose $\{c_1, c_2, \dots, c_n\}$ is a Jordan frame in $(\V, p, e)$ and $p$ is complete. We show that $\bfJ = \big[ c_1, c_2, \dots, c_n \big ]$ is a $\GJF$-matrix. As this is obvious when $n = 1$, we assume $n > 1$. Now, $c_1 \geq 0$ and $\tr(c_1)=1$; so $c_1$ is in the compact convex set $\Delta$. Since $c_1$ has rank one (meaning it has only one nonzero eigenvalue), it is an extreme direction of $\lplus$; see \cite[Proposition 3.8]{gowda-jeong-shukla1}. Thus, $c_1 \in \Ext(\Delta)$. Using the definition of a Jordan frame, we verify that $0\leq c_2\leq e-c_1$; hence, $c_2\in \Delta(c_1)$. Now $c_2$ has rank one and is an extreme point of $\Delta$; hence $c_2\in \Ext(\Delta(c_1)).$ Continuing this way, we see that for $2 \leq k \leq n-1$, $c_k \in \Ext(\Delta(c_1 + c_2 + \dots + c_{k-1}))$. Thus $\bfJ$ is a $\GJF$-matrix. In particular, $\bfJ$ is $e$-doubly stochastic. 
\end{proof}

\begin{example}
    Consider a Euclidean Jordan algebra $(\V, \circ, \ip{\cdot}{\cdot})$ with unit $e$ and rank $n$, equipped with the trace inner product. Let $\{c_1, c_2, \dots, c_n\}$ be a Jordan frame in this algebra. This means that each $c_i$ is a primitive idempotent, $\ip{c_i}{c_j} = 0$ for all $i\neq j$, and $c_1 + c_2 + \dots + c_n = e$. As each $c_i \geq 0$ and $\tr(c_i) = 1$, $\{c_1, c_2, \dots, c_n\}$ becomes a Jordan frame in the complete hyperbolic system $(\V, \det, e)$. It follows that the $\JF$-matrix $\bfJ \coloneq \big[ c_1, c_2, \dots, c_n \big]$ is $e$-doubly stochastic. In conclusion, \textit{every Jordan frame in a Euclidean Jordan algebra defines a $\JF$-matrix in the corresponding hyperbolic system.}
\end{example}

The following result characterizes $\GJF$-matrices in the setting of a Euclidean Jordan algebra.

\begin{proposition} \label{In EJA, JF=GJF}
    Consider a Euclidean Jordan algebra $(\V, \circ, \ip{\cdot}{\cdot})$ with unit $e$ and rank $n$ carrying the trace inner product. Then, in the (complete) hyperbolic system $(\V, \det, e)$, every $\GJF$-matrix arises from a Jordan frame of the algebra.
\end{proposition}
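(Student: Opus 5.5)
Up to a permutation of columns we may assume that $\bfA = \big[ a_1, a_2, \dots, a_n \big]$ comes from the construction in Example \ref{example via extreme points}. We then argue by induction on $k$ that $a_1, a_2, \dots, a_k$ are mutually orthogonal primitive idempotents. Set $c_k \coloneq a_1 + a_2 + \dots + a_k$. The claim is that $c_k$ is an idempotent of rank $k$ and that $\Delta_k = \Delta \cap \V(c_k, 0)$.

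\textbf{Base case.} We have $a_1 \in \Ext(\Delta)$, so by \eqref{eq: extreme of primitive idempotents} $a_1$ is a primitive idempotent. Hence $c_1 = a_1$ is an idempotent of rank one, and $c_1 \neq e$ whenever $n > 1$.

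\textbf{Inductive step.} Suppose $c_k$ is an idempotent of rank $k$ with $k \leq n-1$, so that $c_k \neq e$. By Lemma \ref{EJA lemma},
\[ \Delta_k = \set{x \in \Delta}{0 \leq x \leq e - c_k} = \Delta \cap \V(c_k, 0). \]
Now $\V(c_k, 0)$ is a Euclidean Jordan subalgebra of rank $n-k$ with unit $e - c_k$. Its cone of squares is $\lplus \cap \V(c_k,0)$, and its trace agrees with the restriction of $\tr$. This needs a brief check: a primitive idempotent of the subalgebra is a primitive idempotent of $\V$, because $\V(c_k,0)$ is a Peirce space and idempotents below $e-c_k$ lie in it. Consequently $\Delta_k$ is exactly the ``$\Delta$'' set of the subalgebra.

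Applying \eqref{eq: extreme of primitive idempotents} inside the subalgebra, every extreme point $a_{k+1}$ of $\Delta_k$ is a primitive idempotent of $\V(c_k,0)$. It is therefore a primitive idempotent of $\V$, since it has trace $1$, i.e.\ rank one in $\V$. Moreover $a_{k+1} \circ c_k = 0$. So $a_{k+1}$ is orthogonal to each of $a_1, \dots, a_k$; this holds because $\ip{a_{k+1}}{a_j} \leq \ip{a_{k+1}}{c_k} = 0$ and both elements are $\geq 0$. Hence $c_{k+1} = c_k + a_{k+1}$ is an idempotent of rank $k+1$.

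\textbf{Conclusion.} After $n-1$ steps we have mutually orthogonal primitive idempotents $a_1, \dots, a_{n-1}$, and $a_n$ is the unique element of $\Delta_{n-1}$, namely $e - c_{n-1}$ (Proposition \ref{prop:construction of e-DS}$(iii)$). This is an idempotent of rank $1$, hence primitive, and it is orthogonal to the others. Thus $\{a_1, \dots, a_n\}$ is a Jordan frame of the algebra. The same then holds for the original column order, since permuting a Jordan frame gives a Jordan frame.

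\textbf{Main obstacle.} The delicate step is identifying $\Ext(\Delta_k)$ with the primitive idempotents of $\V(c_k,0)$. This requires justifying three facts:
\begin{itemize}
    \item the subalgebra's unit is $e - c_k$;
    \item its trace inner product matches the restriction of $\ip{\cdot}{\cdot}$, so that trace-one elements coincide;
    \item its positive elements are the positive elements of $\V$ lying in $\V(c_k,0)$.
\end{itemize}
These follow from the Peirce decomposition and the spectral theorem applied inside $\V(c_k,0)$. Each spectral decomposition there extends to one in $\V$ by adjoining a Jordan frame of $\V(c_k,1)$ with zero coefficients.
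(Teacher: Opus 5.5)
Your proof is correct and follows the same route as the paper. You use Lemma \ref{EJA lemma} to identify $\Delta_k$ with the trace-one positive elements of the Peirce subalgebra $\V(c_k,0)$, conclude that its extreme points are primitive idempotents of $\V$ orthogonal to the earlier columns, and iterate. The only difference is that you make explicit the induction on the idempotent $c_k = a_1+\dots+a_k$ and the matching of the subalgebra's unit, trace and cone with those of $\V$, which the paper leaves to ``continuing this way.''
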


\begin{proof} 
    Consider a $\GJF$-matrix $\bfA = \big[ a_1, a_2, \dots, a_n \big]$; without loss of generality, let $\bfA$ come from the construction in Example \ref{example via extreme points}. We show that this must be of the form 
    $\bfJ = \big[ c_1, c_2, \dots, c_n \big]$ for some Jordan frame $\{c_1, c_2, \dots, c_n\}$. To see this, we recall from \eqref{eq: extreme of primitive idempotents} that
    \[ \Ext(\Delta) = \Tau\,(=\text{the set of all primitive idempotents}), \]
    where $\Delta = \set{x \in \V}{x \geq 0,\, \tr(x) = 1}$. Now, since $a_1 \in \Ext(\Delta)$, $a_1$ must be a primitive idempotent, say $e_1\in \Tau$. Then
    \[ \Delta_1 = \Delta(a_1) = \Delta(e_1) = \set{x \in \Delta}{0 \leq x \leq e - e_1}. \]
    
    Now, by Lemma \ref{EJA lemma}, $\Delta(e_1) = \Delta \cap \V(e_1,0) = \set{y \in \V(e_1, 0)}{y \geq 0,\, \tr(y) = 1}.$
    
    Consequently, $\Ext(\Delta(e_1))$ consists of all primitive idempotents in $\V(e_1, 0)$. As primitive idempotents of $\V(e_1,0)$ are primitive idempotents of $\V$,  choosing an element in $\Ext(\Delta(e_1))$ amounts to choosing $e_2 \in \Tau$ with 
    $e_2\circ e_1 = 0$, i.e., with $\ip{e_2}{e_1} = 0$. Thus, $a_2$ must be some $e_2 \in \Tau$. At the next step, $a_3 \in \Ext(\Delta(e_1 + e_2))$ must be a primitive idempotent in $\V(e_1 + e_2, 0)$, so must be of the form $e_3 \in \Tau$ with $\ip{e_3}{e_1 + e_2} = 0$. Since $\ip{e_3}{e_1} \geq 0$ and $\ip{e_3}{e_2} \geq 0$, we see that $e_3$ is orthogonal to both $e_1$ and $e_2$. Continuing this way, we see that $\bfA = \big[ e_1, e_2, \dots, e_n \big]$, where each $e_i \in \Tau$, $\ip{e_i}{e_j} = 0$ for all $i \neq j$. As the sum of $e_i$'s is $e$, the set $\{e_1, e_2, \dots, e_n\}$ is a Jordan frame in $\V$. Thus, $\bfA$ comes from a Jordan frame, that is, $\bfA$ is a $\JF$-matrix. 
\end{proof}

Now, deviating from Euclidean Jordan algebras, we construct the following example.

\begin{example} \label{GJF example 1} 
    On $\R^3$, for $x = (x_1, x_2, x_3)^{T}$, let $p(x) \coloneq x_1^2x_2x_3$ and $e = (1, 1, 1)^{T}$. Then, $(\R^3, p, e)$ is a complete hyperbolic system of degree $4$ with $\lambda(x)=[(x_1, x_1, x_2, x_3)^{T}]^\downarrow \in \R^4$. Since $\lplus=\R^3_+$ and $\tr(x) = 2x_1 + x_2 + x_3$, we have
    \[ \Delta = \set{(x_1, x_2, x_3)^{T} \in \R^3}{x_1, x_2, x_3 \geq 0,\; 2x_1 + x_2 + x_3 = 1}. \]
    Then $\Ext(\Delta) = \big\{ (\frac{1}{2}, 0, 0)^{T}, (0, 1, 0)^{T}, (0, 0, 1)^{T} \big\}$. Now, starting from $(0,0,1)^{T}$, as in Example \ref{example via extreme points}, we construct the $\mathrm{GJF}$-matrix $\bfA=\big[ a_1, a_2, a_3, a_4 \big]$, where $a_1 = (0, 0, 1)^{T}$, $a_2 = a_3 =(\frac{1}{2}, 0, 0)^{T}$, and $a_4 = (0, 1, 0)^{T}$. However, as $a_1$ and $a_4$ are the only rank-one elements with trace one, we note that there is no $\JF$-matrix in this system.
\end{example}

\section{Linear preservers of \texorpdfstring{$e$}{e}-doubly stochasticity}

We now study the problem of describing linear preservers of $e$-doubly stochastic $\V$-matrices. We consider two types: $(i)$ A linear transformation $T : \V \to \V$ such that $T(\bfA) \coloneq \big[ T(a_1), T(a_2), \dots, T(a_n) \big]$ is $e$-doubly stochastic whenever $\bfA= \big[ a_1, a_2, \dots, a_n \big]$ is $e$-doubly stochastic, and $(ii)$ a matrix $D$ such that $D\bfA$ is $e$-doubly stochastic for every $e$-doubly stochastic $\bfA$.
\\

The following result characterizes transformations of the first type.

\begin{theorem} \label{linear preservers}
    Consider a hyperbolic system $(\V, p, e)$ of degree $n > 1$. Then a linear transformation $T: \V \to \V$ preserves $e$-doubly stochastic $\V$-matrices if and only if $T$ is operator doubly stochastic, that is,
    \[ T(\lplus)\subseteq \lplus, \;\; T(e) = e, \;\; \text{and} \;\; \tr(T(x)) = \tr(x) \text{ for all } x \in \V. \] 
\end{theorem}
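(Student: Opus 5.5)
The sufficiency direction is a direct verification. The necessity direction extracts each of the three operator-doubly-stochastic conditions from a suitably chosen $e$-doubly stochastic $\V$-matrix.

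\emph{Sufficiency.} Suppose $T$ is operator doubly stochastic and $\bfA=[a_1,\dots,a_n]$ is $e$-doubly stochastic. Then $T(a_i)\in\lplus$ because $T(\lplus)\subseteq\lplus$. Next, $\tr(T(a_i))=\tr(a_i)=1$ by trace preservation. Finally, $\sum_i T(a_i)=T(e)=e$ by linearity and unitality. So $T(\bfA)$ is $e$-doubly stochastic.

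\emph{Necessity, unitality.} Assume $T$ preserves $e$-doubly stochastic $\V$-matrices. Applying $T$ to the matrix $\bfE=[\frac1n e,\dots,\frac1n e]$ of Example \ref{n=1 case} gives $\sum_i T(\frac1n e)=e$, that is, $T(e)=e$. It also gives $\tr(T(e))=n=\tr(e)$.

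\emph{Necessity, trace preservation.} Let $a$ satisfy $\tr(a)=0$ and $a\neq 0$. Since $n>1$, Example \ref{example-average} supplies $\varepsilon>0$ such that
\[ \big[\tfrac1n e+\varepsilon a,\ \tfrac1n e-\varepsilon a,\ \tfrac1n e,\dots,\tfrac1n e\big] \]
is $e$-doubly stochastic. Its first column has trace one after applying $T$, so $\tr(T(\tfrac1n e+\varepsilon a))=1$. Combined with $\tr(T(\tfrac1n e))=1$, this yields $\tr(T(a))=0$. Only $n>1$ and the choice of $\varepsilon$ are used here, so the condition $\dim(\V)>1$ in that example is not needed. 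For a general $x$, write $x=\frac{\tr(x)}{n}e+\big(x-\frac{\tr(x)}{n}e\big)$, where the second summand is traceless. Linearity then gives $\tr(T(x))=\tr(x)$.

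\emph{Necessity, positivity.} Let $a\in\lplus$. If $\tr(a)>0$, put $b=a/\tr(a)\in\lplus$, which has trace one. Since $n>1$, Example \ref{example-SG} places $b$ as the first column of an $e$-doubly stochastic $\V$-matrix. Hence $T(b)\in\lplus$, and so $T(a)\in\lplus$.

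The remaining case is $a\in\lplus$ with $\tr(a)=0$, which can occur only when $p$ is not complete. This degenerate case is the main obstacle, and I would handle it by perturbation. For every $\varepsilon>0$ we have $a+\varepsilon e\in\lplus$ with trace $n\varepsilon>0$. By the previous case, $T(a)+\varepsilon e=T(a+\varepsilon e)\in\lplus$. Letting $\varepsilon\to0$ and using that $\lplus$ is closed gives $T(a)\in\lplus$.

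The hypothesis $n>1$ is used precisely in invoking Examples \ref{example-average} and \ref{example-SG}.
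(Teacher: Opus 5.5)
Your proof is correct and follows the paper's argument essentially step by step. You get unitality from $\bfE$, trace preservation from the perturbed matrix of Example \ref{example-average}, and positivity from Example \ref{example-SG} together with the $\varepsilon e$ perturbation and the closedness of $\lplus$. The only difference is that you fold the case $\dim(\V)=1$ into the main argument rather than treating it separately, since the perturbation needs only $n>1$ and a zero traceless part is trivial; this is a harmless streamlining.
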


\begin{proof} 
    Suppose that $T$ is operator doubly stochastic. It is easy to see that if $\big[ a_1, a_2, \dots, a_n \big]$ is $e$-doubly stochastic, then $\big[ T(a_1), T(a_2), \dots, T(a_n) \big]$ is $e$-doubly stochastic. 
    
    To see the `only if' part, assume that $\big[ T(a_1), T(a_2), \dots, T(a_n) \big]$ is $e$-doubly stochastic for every $e$-doubly stochastic $\bfA=[a_1,a_2,\dots, a_n]$. Taking $\bfA = \big[ \tfrac{1}{n}e, \tfrac{1}{n}e, \dots, \tfrac{1}{n}e \big]$, we see that $T(e) = e$. When $\dim(\V)=1$, any element $x$ of $\V$ is a scalar multiple of $e$. In this case, it is easy to see that $T(\lplus)\subseteq \lplus$ and $ \tr(T(x)) = \tr(x)$ for all $x \in \V$.
    
    Now assume that $\dim(\V)>1$. Consider any $a \in \V$ with $\tr(a) = 0$. Then, as in Example \ref{example-average} (recall our assumption that $n>1$), there exists an $\varepsilon > 0$ such that 
    \[ \Big[\, \frac{1}{n}e + \varepsilon a, \frac{1}{n}e - \varepsilon a, \frac{1}{n} e, \dots, \frac{1}{n}e \,\Big] \] 
    is $e$-doubly stochastic. Then 
    \[ \Big[\, \frac{1}{n}T(e) + \varepsilon T(a), \frac{1}{n}T(e) - \varepsilon T(a), \frac{1}{n} T(e), \dots, \frac{1}{n}T(e) \,\Big] \] 
    is $e$-doubly stochastic as well. Consequently, $1 = \tr \big( \tfrac{1}{n} T(e) + \varepsilon T(a) \big)$; using the linearity of the trace functional and $T(e) = e$, we see that $\tr(T(a)) = 0$. This shows that $\tr(T(a)) = 0$ for any $a \in \V$ with $\tr(a) = 0$. Now, for any $x \in \V$, we let $a \coloneq x - \frac{1}{n}\tr(x)e$ so that $\tr(a) = 0$ and (consequently) $\tr(T(a)) = 0$. This implies
    \[ \tr(T(x))=\tr(x) \text{ for all } x \in \V. \]
    
    Lastly, to show that $T(\lplus)\subseteq \lplus$, we proceed as follows: Consider $0 \neq a \geq 0$.
    First suppose $\tr(a) > 0$. By scaling, we may assume without loss of generality that $\tr(a) = 1$. Then, as in Example \ref{example-SG} (recall $n > 1$), we construct an $e$-doubly stochastic $\V$-matrix with first column $a$. Then, by the imposed condition on $T$, $T(a) \geq 0$. If $\tr(a)=0$, then for any positive $\varepsilon$, $a + \varepsilon e \geq 0$ and $\frac{1}{n\varepsilon}\tr(a + \varepsilon e) = 1$. Then $T(a + \varepsilon e) \geq 0$. Letting $\varepsilon \to 0$, we see that $T(a)\geq 0$.
    Thus, $T$ is operator doubly stochastic. 
\end{proof}

\begin{remark}
    The above result may fail when $n = 1$. Here is an example. On $\V = \R^2$, for $x=(x_1, x_2)^{T}$, let $p(x) = x_1$ and $e = (1, 1)^{T}$. In this setting, $\lplus = \set{x}{x_1 \geq 0}$ and $\bfA = [\, e \,]$ is the only $e$-doubly stochastic $\V$-matrix. Then the transformation $T : \V \to \V$ defined by $T(x) =(2x_1 - x_2, x_1)^T$ takes $e$ to $e$ (so preserves $e$-doubly stochasticity), but is not operator doubly stochastic.
\end{remark}

Regarding transformations of the second type, we have the following.

\begin{theorem}
    Consider a hyperbolic system $(\V,p,e)$, where $n>1$. If a real $n \times n$ matrix $D$ is doubly stochastic, then for every $e$-doubly stochastic $\V$-matrix $\bfA$, $D\bfA$ is $e$-doubly stochastic. The converse holds when $\dim(\V) > 1$ and $D$ is a nonnegative matrix.  In particular, the converse holds when $\V$ has a Jordan frame.
\end{theorem}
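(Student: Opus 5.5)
The forward direction is a direct verification, and the converse comes from testing $D$ on two well-chosen $e$-doubly stochastic $\V$-matrices; the Jordan frame case then reduces to the converse.

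\emph{Forward direction.} Suppose $D=[d_{ij}]$ is doubly stochastic and $\bfA=[a_1,\dots,a_n]$ is $e$-doubly stochastic, and set $b_i=\sum_j d_{ij}a_j$.
\begin{itemize}
    \item Since $d_{ij}\geq 0$ and $\lplus$ is a convex cone, each $b_i\in\lplus$.
    \item By linearity of the trace, $\tr(b_i)=\sum_j d_{ij}\tr(a_j)=\sum_j d_{ij}=1$, using the row sums of $D$.
    \item Using the column sums, $\sum_i b_i=\sum_j\big(\sum_i d_{ij}\big)a_j=\sum_j a_j=e$.
\end{itemize}
So $D\bfA$ is $e$-doubly stochastic.

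\emph{Converse.} Assume $\dim(\V)>1$, $n>1$, and $D\geq 0$ maps every $e$-doubly stochastic $\V$-matrix to an $e$-doubly stochastic one. Nonnegativity is given, so only the row and column sums remain.
\begin{itemize}
    \item \emph{Row sums.} Apply $D$ to $\bfE=[\tfrac1n e,\dots,\tfrac1n e]$. The $i$th column of $D\bfE$ is $\big(\tfrac1n\sum_j d_{ij}\big)e$. Its trace is $\sum_j d_{ij}$, which must equal $1$.
    \item \emph{Column sums.} Pick $0\neq a$ with $\tr(a)=0$; this exists because $\dim(\V)>1$. By Example \ref{example-average} and the remark that column permutations preserve $e$-doubly stochasticity, for each index $k$ there is an $e$-doubly stochastic $\bfA$ with $a_k=\tfrac1n e+\varepsilon a$ and all other columns $\tfrac1n e-\tfrac{\varepsilon}{n-1}a$. (Alternatively, pair $k$ with one other index $l$ and take $a_k=\tfrac1n e+\varepsilon a$, $a_l=\tfrac1n e-\varepsilon a$, the rest $\tfrac1n e$.)
    \item Compute $\sum_i b_i=\sum_j c_j a_j$, where $c_j$ is the $j$th column sum of $D$. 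The hypothesis gives $\sum_i b_i=e$.
\end{itemize}

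The main obstacle is extracting the column sums from this identity. Write $c=(c_1,\dots,c_n)^T$. With the pairwise choice of $\bfA$, $\sum_j c_ja_j=\tfrac1n\big(\sum_j c_j\big)e+\varepsilon(c_k-c_l)a$. Since the row sums are all $1$, $\sum_j c_j=n$, so $\sum_j c_ja_j=e+\varepsilon(c_k-c_l)a$. Setting this equal to $e$ gives $\varepsilon(c_k-c_l)a=0$. The key point is to make sure $a$ is not a multiple of $e$, which holds because $\tr(a)=0$ while $\tr(e)=n\neq 0$ and $a\neq 0$. Hence $c_k=c_l$ for all pairs $k,l$. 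All $c_j$ are therefore equal, and $\sum_j c_j=n$ forces $c_j=1$. Thus $D$ is doubly stochastic.

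\emph{Jordan frame case.} If $\V$ has a Jordan frame, Proposition \ref{orthonormality of JF} gives $\dim(\V)\geq n>1$. It remains to show $D\geq 0$ automatically. Let $\bfJ=[c_1,\dots,c_n]$ be the $\JF$-matrix of the frame, which is $e$-doubly stochastic. Then $b_i=\sum_j d_{ij}c_j\in\lplus$. Using the orthonormality in Proposition \ref{orthonormality of JF} and Proposition \ref{SG theorem}, $d_{ij}=\ip{b_i}{c_j}\geq 0$, since $b_i,c_j\in\lplus$. So $D$ is nonnegative, and the converse established above applies.
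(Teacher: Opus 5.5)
Your proposal is correct and follows the paper's proof essentially step for step: row sums from $\bfE$, equal column sums from the pairwise perturbation $[\tfrac1n e+\varepsilon a,\tfrac1n e-\varepsilon a,\tfrac1n e,\dots,\tfrac1n e]$ combined with column permutations, and nonnegativity in the Jordan frame case via $d_{ij}=\ip{b_i}{c_j}\geq 0$. (Minor point: once $\sum_j c_j=n$ is known, the identity $\varepsilon(c_k-c_l)a=0$ needs only $a\neq 0$, so your remark that $a$ is not a multiple of $e$ is superfluous.)
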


\begin{proof} 
    Suppose $D$ is a doubly stochastic matrix and $\bfA$ is an $e$-doubly stochastic $\V$-matrix. Using the linearity of the trace functional, it is easy to show that $D\bfA$ is $e$-doubly stochastic. 
    
    We now prove the converse. Suppose $\dim(\V) > 1$ and $D$ is a nonnegative matrix. Assume that $D\bfA$ is $e$-doubly stochastic for every $\bfA$ that is $e$-doubly stochastic.
    
    Taking $\bfA = \big[ \frac{1}{n}e, \frac{1}{n}e, \dots, \frac{1}{n}e \big]$ and writing $D\bfA = \big[ b_1, b_2, \dots, b_n \big]$, we observe that $\tr(b_i)=1$. This implies that $\sum_{j=1}^{n}d_{ij}=1$ for all $i$. Hence, each row of $D$ has sum $1$.
    
    Since $\dim(\V) > 1$, we can choose $0 \neq a \in \V$ such that $\tr(a) = 0$. Let $\bfA = \big[ \frac{1}{n}e + \varepsilon a, \frac{1}{n}e - \varepsilon a, \frac{1}{n}e , \dots, \frac{e}{n}\,\big ]$, see Example \ref{example-average}, and write $D\bfA = \big[ b_1, b_2, \dots, b_n \big]$. Then, for all $i$, $b_i = \frac{1}{n} e + \varepsilon(d_{i1} - d_{i2}) a$. Since $\sum_{i=1}^{n}b_i=e$ and $a \neq 0$, we get 
    \[ \sum_{i=1}^{n} d_{i1} = \sum_{i=1}^{n} d_{i2}. \] 
    This means that the first two columns of $D$ have the same column sum. By considering $e$-doubly stochastic $\V$-matrices obtained by permuting the columns of $\bfA$ and mimicking the above argument, we see that all columns of $D$ have the same column sum. Since the sum of all entries of $D$ equals $n$ (recall that each row of $D$ has sum $1$), we see that each column of $D$ has sum $1$. As $D$ is assumed to be nonnegative, it is doubly stochastic. 
    
    Now, suppose that $\V$ has a Jordan frame, say, $\{c_1, c_2, \dots, c_n\}$ and consider the corresponding $e$-doubly stochastic matrix $\mathbf{J} = \big[c_1, c_2, \dots, c_n \big]$. We verify that $\dim(\V) > 1$ and $D$ is a nonnegative matrix. Now, from Proposition \ref{orthonormality of JF}, $\mathbf{J} \ast \mathbf{J}= I_{n}$ and  $\dim(\V)\geq n > 1$. As $D{\bf J}=[b_1,b_2,\dots, b_n]$ is $e$-doubly stochastic, we have 
    \[ 0\leq \ip{b_i}{c_j} = \sum_{k=1}^{n} d_{ik} \ip{c_k}{c_j} = \sum_{k=1}^{n} d_{ik} \delta_{kj} = d_{ij} \text{ for all } i, j. \]
    Thus, $D$ is a nonnegative matrix. Hence, by what we have proved earlier, $D$ is doubly stochastic.
\end{proof}

\begin{remark}
    The converse part of the above result may fail when   $\dim(\V) = 1$. For example, consider $\V = \R$, $p(x) = x^2$, and $e = 1$ so that $\Delta = \{ \frac{1}{2} \}$ and $\tr(x) = 2x$. In this system, the only $e$-doubly stochastic matrix is $\bfA = \big[ \frac{1}{2}, \frac{1}{2} \big]$. Then the matrix $D = \begin{bmatrix} 1 & 0 \\ 1 & 0 \end{bmatrix}$ is not doubly stochastic, although $D\bfA\, ( = \bfA$) is $e$-doubly stochastic.
\end{remark}

\section{Majorization results}

In the setting of $\Rn$, for an $n \times n$ doubly stochastic matrix $A$, we know that $A r \prec r$ and $A^{T} r \prec r$ for all $r \in \Rn$. In what follows, we state analogs of these in hyperbolic systems. First, we recall the following result from \cite[Proposition 5.6]{gowda-jeong-shukla1}.

\begin{proposition} 
    Suppose $\bfA = \big[ a_1, a_2, \dots, a_n \big]$ is $e$-doubly stochastic. Then
    \begin{equation} \label{lambda sum majorized by onek}
        \lambda(a_1 + \dots + a_k) \prec \one_k \text{ for all } 1 \leq k \leq n,
    \end{equation}
    where $\one_k \coloneq (1, 1, \dots, 1, 0, \dots, 0)^{T}$ has $k$ number of ones.
\end{proposition}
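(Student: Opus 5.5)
Write $s_k \coloneq a_1 + \dots + a_k$. The plan is to get three facts about $\lambda(s_k)$ and then read off the majorization directly.

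First, the lower bound. Each $a_i \in \lplus$, so $s_k \geq 0$, and Theorem \ref{gurvits result1}$(ii)$ (with $0 \leq s_k$) gives $\lambda(s_k) \geq 0$.

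Second, the upper bound. We have $e - s_k = a_{k+1} + \dots + a_n \geq 0$, so $s_k \leq e$. Applying Theorem \ref{gurvits result1}$(ii)$ again gives $\lambda(s_k) \leq \lambda(e) = \one$. Hence every eigenvalue of $s_k$ lies in $[0,1]$.

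Third, the trace. By linearity of the trace functional, $\tr(s_k) = \sum_{i=1}^k \tr(a_i) = k$, so the entries of $\lambda(s_k)$ sum to $k$.

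With these facts in hand, write $\mu = \lambda(s_k)$, which is already in decreasing order. We must check that $\sum_{i=1}^m \mu_i \leq \sum_{i=1}^m (\one_k)^\downarrow_i = \min(m,k)$ for every $m$, with equality at $m=n$.
\begin{itemize}
    \item For $m \leq k$: each $\mu_i \leq 1$, so the partial sum is at most $m$.
    \item For $m > k$: the entries are nonnegative, so the partial sum is at most the full sum, which is $k$.
    \item For $m = n$: the full sum equals $k = \tr(\one_k)$.
\end{itemize}
Together these give $\lambda(s_k) \prec \one_k$.

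The only step that needs any care is $s_k \leq e \Rightarrow \lambda(s_k) \leq \one$, and Theorem \ref{gurvits result1}$(ii)$ supplies it directly. No completeness assumption is needed anywhere in the argument.
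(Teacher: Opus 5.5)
Your proof is correct and complete. The conditions $0 \le s_k \le e$ and $\tr(s_k) = k$ force $\lambda(s_k)$ to be a decreasing vector with entries in $[0,1]$ summing to $k$, and your partial-sum check shows that any such vector is majorized by $\one_k$. You are also right that completeness is never used.

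Note that this paper does not prove the statement itself; it imports it from \cite[Proposition 5.6]{gowda-jeong-shukla1}. So there is no in-paper argument to compare against, and your argument serves as a self-contained substitute.

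Two small simplifications are available, and both remove any reliance on Theorem \ref{gurvits result1}:
\begin{itemize}
\item The bound $\lambda(s_k) \ge 0$ is simply the definition of $s_k \in \lplus$, since $\lplus$ is a convex cone.
\item The upper bound follows from the elementary identity $\lambda_i(e - x) = 1 - \lambda_{n-i+1}(x)$, which is used in Example \ref{example-SG}. Since $e - s_k \in \lplus$, it gives $1 - \lambda_j(s_k) \ge 0$ for every $j$.
\end{itemize}
With these, the argument needs only the definition of eigenvalues and the linearity of the trace.
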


\begin{theorem} \label{majorization for v-matrices1}
    If $\bfA = \big[ a_1, a_2, \dots, a_n \big]$ is $e$-doubly stochastic, then
    \[ \lambda(\bfA \bfr) \prec \bfr \text{ for all } \bfr \in \Rn. \]
    The converse holds when $p$ is complete.
\end{theorem}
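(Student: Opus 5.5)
Since permuting the columns of $\bfA$ keeps it $e$-doubly stochastic, and permuting the entries of $\bfr$ correspondingly leaves both $\bfA\bfr$ and $\bfr^\downarrow$ unchanged, I will assume $r_1 \geq r_2 \geq \dots \geq r_n$. The idea is to write $\bfA\bfr$ as a telescoping sum of partial sums $s_k \coloneq a_1 + \dots + a_k$:
\[ \bfA \bfr = \sum_{k=1}^{n-1} (r_k - r_{k+1})\, s_k + r_n e . \]
Here the coefficients $r_k - r_{k+1}$ are nonnegative and $s_n = e$. By Theorem \ref{gurvits result1}$(i)$, extended to finitely many summands, together with $\lambda(te+x) = t\one + \lambda(x)$ and $\lambda(cx) = c\lambda(x)$ for $c \geq 0$, I get
\[ \lambda(\bfA\bfr) \prec \sum_{k=1}^{n-1}(r_k - r_{k+1})\lambda(s_k) + r_n\one . \]
Majorization is preserved under adding a multiple of $\one$, so it is enough to handle the sum. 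By \eqref{lambda sum majorized by onek}, $\lambda(s_k) \prec \one_k$. Each $\lambda(s_k)$ is sorted decreasingly, so the top-$m$ partial sums of the nonnegative combination $\sum_k c_k \lambda(s_k)$ equal $\sum_k c_k$ times the top-$m$ sums of each $\lambda(s_k)$. Each of these is at most the top-$m$ sum of $\one_k$, and the totals agree. Hence $\sum_k c_k\lambda(s_k) \prec \sum_k c_k \one_k$. Finally, $\sum_{k=1}^{n-1}(r_k - r_{k+1})\one_k + r_n\one = \bfr$, and majorization is transitive, so $\lambda(\bfA\bfr) \prec \bfr$.

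For the converse, assume $p$ is complete and $\lambda(\bfA\bfr) \prec \bfr$ for all $\bfr$.
\begin{itemize}
\item Taking $\bfr = \one$ gives $\lambda(\sum_i a_i) \prec \one$, so $\lambda(\sum a_i) = \one$ and hence $\lambda(\sum a_i - e) = 0$. Completeness then gives $\sum a_i = e$.
\item Taking $\bfr = e_i$, the $i$-th standard basis vector, gives $\lambda(a_i) \prec e_i$. Every component of a vector majorized by $e_i$ lies between $0$ and $1$ (the smallest entry is at least the smallest entry of $e_i$, namely $0$), and the sum is $1$. So $a_i \in \lplus$ and $\tr(a_i) = 1$.
\end{itemize}

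I expect the main obstacle to be the step combining majorizations with nonnegative weights. It uses the fact that the vectors $\lambda(s_k)$ are all in decreasing order, and the same holds for the $\one_k$. This common ordering is what makes the top-$m$ sums additive, so I must state it carefully. Everything else follows directly from Theorem \ref{gurvits result1} and \eqref{lambda sum majorized by onek}.
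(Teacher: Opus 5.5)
Your proof is correct and follows the paper's own argument. Both use the telescoping decomposition into the partial sums $a_1+\dots+a_k$, repeated use of Gurvits' $\lambda(x+y)\prec\lambda(x)+\lambda(y)$, and the bound $\lambda(a_1+\dots+a_k)\prec\one_k$; the converse uses the same test vectors $\bfr=e_i$ and $\bfr=\one$. The differences are minor: you carry the $r_n e$ term instead of shifting to $r_n=0$, and you explicitly justify $\sum_k c_k\lambda(s_k)\prec\sum_k c_k\one_k$ through the common decreasing order, a step the paper leaves implicit.
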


\begin{proof}
    First suppose that $\bfA = \big[ a_1, a_2, \dots, a_n \big]$ is $e$-doubly stochastic. Let $\bfr = (r_1, r_2, \dots, r_n)^{T} \in \Rn$ and $x= r_1 a_1 + r_2 a_2 + \dots + r_n a_n$. Since $e = a_1 + a_2 + \dots + a_n$, we have $x + te =(r_1 + t)a_1 + (r_2 + t)a_2 + \dots + (r_n + t)a_n$ for any $t \in \R$. Also, for any $\bfs \in \Rn$, note that $\bfr \prec \bfs$ if and only if $\bfr + t \one \prec \bfs + t \one$. Hence, by adding a suitable multiple of $e$ to $x$ and rearranging the indices if necessary, we may assume that $r_1 \geq r_2 \geq \dots \geq r_n = 0$.
    
    Now, define $b_k = a_1 + a_2 + \dots + a_k$ for $k = 1, 2, \dots, n$. Then
    \[ \sum_{i=1}^{n} r_i a_i = \sum_{k=1}^{n-1} (r_k - r_{k+1}) b_k. \]
    Note that $r_k - r_{k+1} \geq 0$ for all $k = 1, 2, \dots, n-1$. From \eqref{lambda sum majorized by onek}, $\lambda(b_k) \prec \one_k$ for all $k$; so, 
    \begin{align*}
        \lambda \bigg( \sum_{i=1}^{n} r_i a_i \bigg) 
        &= \lambda \bigg( \sum_{k=1}^{n-1} (r_k - r_{k+1}) b_k \bigg) \\
        &\prec \sum_{k=1}^{n-1} (r_k - r_{k+1}) \lambda(b_k) 
        \prec \sum_{k=1}^{n-1} (r_k - r_{k+1}) \one_k,
    \end{align*}
    where the first majorization inequality is obtained by repeatedly invoking Item $(i)$ in Theorem \ref{gurvits result1}. Furthermore, we easily verify that 
    \[ \sum_{k=1}^{n-1} (r_k - r_{k+1}) \one_k= (r_1, r_2, \dots, r_n)^{T} = r. \]
    This proves that $\lambda(\bfA \bfr) \prec \bfr$.
    
    For the converse, assume that $\lambda(\bfA \bfr) \prec \bfr$ for all $\bfr \in \Rn$. Now, for any index $i$, taking $\bfr = \bfe_i \in \Rn$, where $\bfe_i$ is the $i$th coordinate vector, we see that $\lambda(a_i) \prec \bfe_i$. This means that $\lambda(a_i) \geq 0$, i.e., $a_i \in \lplus$. Also, $\tr(a_i) = 1$. Now, taking $\bfr = \one$, we see $\lambda(a_1 + a_2 + \dots + a_n) \prec \one$, hence $\lambda(a_1 + a_2 + \dots + a_n) = \one$. By the completeness of $p$, we get $a_1 + a_2 + \dots + a_n = e$.
\end{proof}

\begin{remark}
    When $\bfA$ comes from a Jordan frame, the majorization inequality in the above theorem becomes $\lambda(\bfA \bfr) = \bfr^\downarrow$ for all $\bfr \in \Rn$; see \cite[Theorem 4.16]{gowda-jeong-shukla1}. 
\end{remark}

In the converse part of our next result, we impose a condition on the dual of the hyperbolicity cone. We recall from Theorem \ref{SG theorem} that $\lplus\subseteq \lplus^\ast.$
\begin{theorem} \label{majorization for v-matrices2} 
    If $\bfA = \big[ a_1, a_2, \dots, a_n \big]$ is $e$-doubly stochastic, then
    \[ \bfA^{T} x \prec \lambda (x) \text{ for all } x \in \V. \] 
    The converse holds when $p$ is complete and $\lplus = \lplus^{\ast}$.  In particular, the converse holds in the setting of a Euclidean Jordan algebra.
\end{theorem}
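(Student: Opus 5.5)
We prove the forward implication by duality, using Theorem \ref{majorization for v-matrices1} as the main input. Fix $x \in \V$ and set $\bfs \coloneq \bfA^{T}x \in \Rn$. The sums agree:
\[ \sum_{i=1}^{n} \ip{a_i}{x} = \ip{e}{x} = \tr(x) = \ip{\lambda(x)}{\one}. \]
For the partial sums of the decreasing rearrangement, fix $k$ and choose indices $i_1, \dots, i_k$ carrying the $k$ largest entries of $\bfs$. Let $\bfr$ be the $0/1$ indicator vector of these indices, so that $\bfr^\downarrow = \one_k$. Then
\[ \sum_{j=1}^{k} s^\downarrow_j = \ip{\bfr}{\bfA^{T}x} = \ip{\bfA\bfr}{x} \leq \ip{\lambda(\bfA\bfr)}{\lambda(x)}, \]
where the inequality is \eqref{semi-ftvn condition}. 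By Theorem \ref{majorization for v-matrices1}, $\lambda(\bfA\bfr) \prec \bfr$, that is, $\lambda(\bfA\bfr) \prec \one_k$. Since $\lambda(x)$ is decreasing, the map $u \mapsto \ip{u}{\lambda(x)}$, restricted to decreasingly ordered $u$, is monotone under majorization: by Abel summation, $\ip{u}{\lambda(x)} \leq \ip{\one_k}{\lambda(x)}$ whenever $u = u^\downarrow \prec \one_k$. Hence the right side is at most $\sum_{j=1}^{k}\lambda_j(x)$, which proves $\bfA^{T}x \prec \lambda(x)$. The only delicate point is the Abel summation step, and it is the standard fact that a decreasing vector majorized by $w$ pairs with a decreasing vector to give at most $\ip{w^\downarrow}{\cdot}$.

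For the converse, assume $p$ is complete, $\lplus = \lplus^\ast$, and $\bfA^{T}x \prec \lambda(x)$ for all $x$. We verify the three defining conditions.
\begin{itemize}
    \item Taking $x = e$ gives $\bfA^{T}e \prec \one$, hence $\bfA^{T}e = \one$, i.e.\ $\tr(a_i) = 1$ for every $i$.
    \item For $x \in \lplus$, each entry of $\bfA^{T}x$ is at least $\min_j s_j \geq \lambda_n(x) \geq 0$, using majorization (the smallest entry dominates the smallest eigenvalue). Thus $\ip{a_i}{x} \geq 0$ for all $x \in \lplus$, so $a_i \in \lplus^\ast = \lplus$.
    \item Let $y \coloneq \sum_i a_i - e$. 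For every $x$, the sum condition gives $\ip{y}{x} = \sum_i \ip{a_i}{x} - \tr(x) = 0$. Since completeness makes \eqref{ip} an inner product, $y = 0$.
\end{itemize}

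In the Euclidean Jordan algebra setting, $\det$ is complete and the cone of squares is self-dual under the trace inner product, so the converse applies there.
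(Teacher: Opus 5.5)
Your proposal is correct and follows essentially the paper's argument: bound the top-$k$ partial sum using $\ip{\cdot}{\cdot} \leq \ip{\lambda(\cdot)}{\lambda(\cdot)}$ together with the fact that the eigenvalue vector of a sum of $k$ columns is majorized by $\one_k$, and verify the converse's three conditions in the same way. The differences are cosmetic. You obtain the $\one_k$ bound from Theorem \ref{majorization for v-matrices1} applied to an indicator vector rather than from \eqref{lambda sum majorized by onek} after reordering the columns, and you prove by Abel summation the monotonicity step that the paper cites from Bhatia.
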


\begin{proof} 
    We fix $x\in \V$. Then, by definition, $\bfA^{T} x = (\ip{a_1}{x}, \ip{a_2}{x}, \dots, \ip{a_n}{x} )^{T}$. By rearranging $a_i$'s, if necessary, we can assume that $\ip{a_1}{x} \geq \ip{a_2}{x} \geq \dots \geq \ip{a_n}{x}$. From \eqref{lambda sum majorized by onek}, $\lambda(a_1 + \dots + a_k) \prec \one_k$ for $1 \leq k \leq n$. Then for $1 \leq k \leq n$, we have
    \begin{align*}
        \ip{a_1}{x} + \ip{a_2}{x} + \dots + \ip{a_k}{x}
        &= \ip{a_1 + a_2 + \dots + a_k}{x} \\
        &\leq \ip{\lambda(a_1 + a_2 + \dots + a_k)}{\lambda(x)} \\
        &\leq \ip{\one_k}{\lambda(x)} \\
        & = \lambda_1(x) + \lambda_2(x) + \dots + \lambda_k(x),
    \end{align*}
    where the first inequality comes from the inner product expanding property of $\lambda$, see \eqref{semi-ftvn condition}, and the second inequality comes from \eqref{lambda sum majorized by onek} and \cite[Problem II.5.16]{bhatia}. Furthermore, when $k = n$,
    \begin{align*}
        \ip{a_1}{x} + \ip{a_2}{x} + \dots + \ip{a_n}{x}
        &= \ip{a_1 + a_2 + \dots + a_n}{x} \\
        & = \ip{e}{x} \\
        & = \lambda_1(x) + \lambda_2(x) + \dots + \lambda_n(x).
    \end{align*}
    Thus, $\bfA^{T} x \prec \lambda(x)$.
    
    To see the converse, assume that $p$ is complete and $\lplus = \lplus^{\ast}$. Let $x \in \lplus$ so that $\lambda(x) \in \Rnp$. Since $\bfA^{T} x \prec \lambda(x)$, $\bfA^{T} x \in \Rnp$ as well. So, for each $x\in \lplus$ and every index $i$, $\ip{a_i}{x} \geq 0$. As $\lplus=\lplus^{\ast}$, we have $a_i\in \lplus$ for all $i$.
    
    Next, we have $\bfA^{T} e \prec \lambda(e) = \one$, implying $\bfA^{T} e = \one$. Thus $\tr(a_i) = \ip{a_i}{e} = 1$ for each $i$. Lastly, writing $a = a_1 + a_2 + \dots + a_n$, $\bfA^{T} x \prec \lambda(x)$ implies that $\ip{a}{x} = \ip{e}{x}$ for all $x \in \V$. As we are working with an inner product (recall that $p$ is complete), we obtain $a = e$.\\
    Finally, in the setting of a Euclidean Jordan algebra, $p$ (the determinant) is complete, and the hyperbolicity cone (the symmetric cone) is self-dual.
\end{proof}

Since a Euclidean Jordan algebra is complete and its hyperbolicity cone is self-dual, we obtain the following corollary.

\begin{corollary}
    Suppose $\V$ is a Euclidean Jordan algebra with unit $e$ and rank $n$.
    Then the following are equivalent for a $\V$-matrix $\bfA = \big[ a_1, a_2, \dots, a_n \big]$:
    \begin{itemize}
        \item[$(i)$] $\bfA$ is $e$-doubly stochastic.
        \item[$(ii)$] $\lambda(\bfA \bfr) \prec \bfr$ for all $\bfr \in \Rn$.
        \item[$(iii)$] $\bfA^{T} x \prec \lambda(x)$ for all $x \in \V$.
    \end{itemize}
\end{corollary}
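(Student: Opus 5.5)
The corollary follows by specializing Theorems \ref{majorization for v-matrices1} and \ref{majorization for v-matrices2} to the hyperbolic system $(\V, \det, e)$ attached to the Euclidean Jordan algebra. Both theorems give their forward implications in any hyperbolic system. Their converses need extra hypotheses: completeness of $p$ for the first, and completeness together with $\lplus = \lplus^{\ast}$ for the second. So the plan is to check these two hypotheses once and then read off the equivalences.

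First, I record the needed facts about $(\V, \det, e)$, all noted earlier in the paper. The determinant is a complete hyperbolic polynomial of degree $n$ in direction $e$. The semi-inner product \eqref{ip} reduces to the trace inner product $\ip{x}{y} = \tr(x \circ y)$, which is a genuine inner product. The hyperbolicity cone $\lplus$ is the cone of squares, i.e., the symmetric cone, and this cone is self-dual with respect to the trace inner product, so $\lplus = \lplus^{\ast}$. These are standard (Faraut--Kor\'anyi). The only point to watch is that the duality in the definition of $\lplus^{\ast}$ uses the inner product \eqref{ip}, and this coincides with the trace inner product here. That makes the self-duality of the symmetric cone exactly the condition required in Theorem \ref{majorization for v-matrices2}.

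For $(i) \Leftrightarrow (ii)$: the forward direction is Theorem \ref{majorization for v-matrices1}, which holds in every hyperbolic system. The reverse direction is the converse part of that theorem, which applies because $\det$ is complete.

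For $(i) \Leftrightarrow (iii)$: the forward direction is Theorem \ref{majorization for v-matrices2}. The reverse direction is its converse, valid since $p$ is complete and $\lplus = \lplus^{\ast}$; the final sentence of that theorem already states it for Euclidean Jordan algebras.

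Combining the two equivalences gives $(i) \Leftrightarrow (ii) \Leftrightarrow (iii)$. No step is a genuine obstacle. The only care needed is the identification of \eqref{ip} with the trace inner product, so that self-duality is taken with respect to the right pairing.
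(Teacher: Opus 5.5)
Your proposal is correct and matches the paper's argument: the corollary is obtained by applying Theorems \ref{majorization for v-matrices1} and \ref{majorization for v-matrices2} to $(\V, \det, e)$, using that $\det$ is complete and the symmetric cone is self-dual with respect to the trace inner product, which coincides with \eqref{ip}. Your remark that self-duality must be taken with respect to the pairing \eqref{ip} is the right point to check, and the paper's identification of \eqref{ip} with the trace inner product settles it.
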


\begin{remark}
    Suppose $\V$ is a Euclidean Jordan algebra with unit $e$ and rank $n$. Let $\{e_1, e_2, \dots, e_n\}$ be a Jordan frame in $\V$. Relative to this Jordan frame, any $x \in \V$ has a Peirce decomposition \cite{faraut-koranyi}:
    \[ x = \sum_{i=1}^{n} x_i e_i + \sum_{i < j} x_{ij}, \]
    where $x_i \in \R$ and $x_{ij} \in \V(e_i, \frac{1}{2}) \cap \V(e_j, \frac{1}{2})$; see \cite[Theorem IV.2.1]{faraut-koranyi}. From the orthogonality of elements in a Jordan frame, 
    $\ip{x}{e_i} = x_i$ for all $1 \leq i \leq n$. As $\bfA= \big[ e_1, e_2, \dots, e_n \big]$ is $e$-doubly stochastic, the statement $(iii)$ in the above corollary, $\bfA^{T} x \prec \lambda(x)$, can be interpreted as ``The diagonal of $x$ (in the Peirce decomposition relative to a Jordan frame) is majorized by the eigenvalue vector of $x$.'' This statement can be regarded as a generalization of Schur's theorem that the diagonal of a Hermitian matrix is majorized by its eigenvalue vector; see \cite[Corollary 4.6]{gowda-tao-cauchy}.
\end{remark}

Our next example shows that the converse of Theorem \ref{majorization for v-matrices2} may fail without the assumption $\lplus = \lplus^{\ast}$.

\begin{example}
    Let $\V$ be a $5$-dimensional subspace of $\mathcal{S}^3$ given by
    \[ \V = \set{\begin{bmatrix} x_1 & x_2 & x_3 \\ x_2 & x_4 & 0 \\ x_3 & 0 & x_5
    \end{bmatrix}}{x_1, \dots, x_5 \in \R}. \]
    Then $(\V, \det, I)$ is a complete hyperbolic system with the corresponding hyperbolicity cone $\lplus = \V \cap \mathcal{S}_+^3$. Its dual cone  (with respect to the trace inner product)
    \[ \lplus^{\ast} = \set{\begin{bmatrix} x_1 & x_2 & x_3 \\ x_2 & x_4 & 0 \\ x_3 & 0 & x_5
    \end{bmatrix}}{\begin{bmatrix} x_1 & x_2 \\ x_2 & x_4
        \end{bmatrix} \in \mathcal{S}^2_+, \begin{bmatrix} x_1 & x_3 \\ x_3 & x_5
        \end{bmatrix} \in \mathcal{S}^2_+} \]
    is known in the literature as the \emph{Vinberg cone}. As $\lplus$ is a proper subset of $\lplus^{\ast}$, we see that $\lplus$ is not self-dual; see \cite{gouveia-ito-lourenco}. We now consider an orthogonal matrix $U$ whose columns are 
    \[ r_1 = \frac{1}{3} (1, -2, 2)^{T}, \;\; r_2 = \frac{1}{3} (2, 2, 1)^{T}, \;\; r_3 = \frac{1}{3} (-2, 1, 2)^{T}. \]
    Let $\bfA = \big[ A_1, A_2, A_3 \big]$, where $A_i = \proj_{\V}(r_i r_i^{T})$ (the projection of $r_i r_i^{T}$ onto $\V$) for $i = 1, 2, 3$. Explicitly,
    \[ A_1 = \frac{1}{9} \begin{bmatrix*}[r] 1 & -2 & 2 \\ -2 & 4 & 0 \\ 2 & 0 & 4 \end{bmatrix*}\!, \;\; 
    A_2 = \frac{1}{9} \begin{bmatrix*}[r] 4 & 4 & 2 \\ 4 & 4 & 0 \\ 2 & 0 & 1 \end{bmatrix*}\!, \;\;
    A_3 = \frac{1}{9} \begin{bmatrix*}[r] 4 & -2 & -4 \\ -2 & 1 & 0 \\ -4 & 0 & 4 \end{bmatrix*}\!. \]
    Now let $X\in \V$. Since the $(2, 3)$ and $(3, 2)$-entries of $X$ are zero, we have
    \[ \ip{A_i}{X} = \ip{\proj_{\V}(r_i r_i^{T})}{X} = \ip{r_i r_i^{T}}{X} = \tr(r_i r_i^{T} X) = r_i^{T} X r_i. \]
    Hence, by the Schur majorization theorem (that the diagonal of a Hermitian matrix is majorized by its eigenvalue vector), see \cite{marshal-olkin},
    \[ \bfA^{T} X = \operatorname{diag}(U^{T} X U) \prec \lambda(U^{T} X U) = \lambda(X). \]
  However, as $\det(A_1)<0$, $A_1\not \in \lplus$; thus, $\bfA$ is not $e$-doubly stochastic.
\end{example}

We recall (from Section 2.3) that a linear transformation $T : \V \to \V$ is \emph{$\lambda$-doubly stochastic} if 
\[ \lambda(T(x)) \prec \lambda(x) \text{ for all } x \in \V. \]

\begin{theorem} \label{cor:operator_DS}
    \textup{(A generalization of Schur's theorem)} 
    Suppose $\bfA = \big[ a_1, a_2, \dots, a_n \big]$ and $\bfB = \big[ b_1, b_2, \dots, b_n \big]$ are $e$-doubly stochastic. Then $\bfA \odot \bfB$ is a $\lambda$-doubly stochastic transformation. Consequently, when $p$ is complete, $\bfA \odot \bfB$ is an operator doubly stochastic transformation.
\end{theorem}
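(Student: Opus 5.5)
The plan is to obtain the majorization by chaining the two majorization results already proved, Theorems \ref{majorization for v-matrices1} and \ref{majorization for v-matrices2}. By Definition \ref{v-matrix defn} and the identity noted right after it,
\[ (\bfA \odot \bfB)(x) = \sum_{i=1}^{n} \ip{b_i}{x} a_i = \bfA(\bfB^{T} x) \quad \text{for all } x \in \V. \]

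Fix $x \in \V$ and put $\bfr \coloneq \bfB^{T} x \in \Rn$. The chain has two links.
\begin{itemize}
    \item[(1)] Since $\bfA$ is $e$-doubly stochastic, Theorem \ref{majorization for v-matrices1} gives $\lambda(\bfA \bfr) \prec \bfr$.
    \item[(2)] Since $\bfB$ is $e$-doubly stochastic, Theorem \ref{majorization for v-matrices2} gives $\bfr = \bfB^{T} x \prec \lambda(x)$.
\end{itemize}
Majorization on $\Rn$ is transitive: the partial sums of decreasing rearrangements compare transitively, and the total sums agree. Hence
\[ \lambda\big((\bfA \odot \bfB)(x)\big) \prec \lambda(x) \quad \text{for every } x \in \V, \]
so $\bfA \odot \bfB$ is $\lambda$-doubly stochastic. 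Neither link needs completeness, because only the forward directions of the two theorems are used.

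For the second assertion, assume $p$ is complete. The implication \eqref{convex to lambda to operator ds} says that $\lambda$-doubly stochastic transformations are operator doubly stochastic, which finishes the proof. I would also check the three operator properties directly.
\begin{itemize}
    \item \emph{Unital:} $(\bfA\odot\bfB)(e) = \sum_i \ip{b_i}{e} a_i = \sum_i \tr(b_i)\, a_i = \sum_i a_i = e$.
    \item \emph{Trace-preserving:} $\tr\big((\bfA\odot\bfB)(x)\big) = \sum_i \ip{b_i}{x}\tr(a_i) = \ip{\sum_i b_i}{x} = \ip{e}{x} = \tr(x)$.
    \item \emph{Positive:} if $x \in \lplus$, then $\ip{b_i}{x} \ge 0$ by Proposition \ref{SG theorem}. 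So $(\bfA\odot\bfB)(x)$ is a nonnegative combination of the $a_i \in \lplus$, and it lies in $\lplus$ because $\lplus$ is a convex cone.
\end{itemize}
This direct check does not even use completeness.

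I do not expect a real obstacle. The one point to watch is that the ``inner'' majorization (2) feeds a vector in $\Rn$ into the ``outer'' one (1), which holds for arbitrary real coefficient vectors $\bfr$. Theorem \ref{majorization for v-matrices1} was proved for all $\bfr \in \Rn$, so this composition is legitimate.
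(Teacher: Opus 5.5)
Your proof is correct and matches the paper's. Like the paper, you write $(\bfA\odot\bfB)(x)=\bfA(\bfB^{T}x)$, chain $\lambda(\bfA\bfr)\prec\bfr=\bfB^{T}x\prec\lambda(x)$ using Theorems \ref{majorization for v-matrices1} and \ref{majorization for v-matrices2}, and then invoke \eqref{convex to lambda to operator ds} for the complete case. Your extra direct check of the three operator properties is also valid, and as you note it shows the operator doubly stochastic conclusion holds even without completeness, since it only needs $\lplus\subseteq\lplus^{\ast}$ from Proposition \ref{SG theorem}.
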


\begin{proof}
    Let $x \in \V$. Then by Definition \ref{v-matrix defn}, $(\bfA \odot \bfB)(x) = \sum_{i=1}^{n} \ip{b_i}{x} a_i$. Now, letting $\bfr \coloneq \bfB^{T} x =  (\ip{b_1}{x}, \ip{b_2}{x}, \dots, \ip{b_n}{x})$ we have $(\bfA \odot \bfB)(x) = \bfA \bfr$. This shows that
    \[ \lambda((\bfA \odot \bfB)(x)) = \lambda(\bfA \bfr) \prec \bfr = \bfB^{T} x \prec \lambda(x), \]
    where the majorization inequalities come from Theorems \ref{majorization for v-matrices1} and \ref{majorization for v-matrices2}, respectively. Hence, $\bfA \odot \bfB$ (as a transformation) is $\lambda$-doubly stochastic. Consequently, when $p$ is complete, it is operator doubly stochastic; see (\ref{convex to lambda to operator ds}).
\end{proof}

\begin{remark}
    With $(\V, p, e) = (\Hn, \det, I_{n})$, we can let $\bfA = \bfB = \big[ E_1, E_2, \dots, E_n \big]$, where $E_i$ denotes the $n \times n$ matrix with $1$ in the $(i,i)$ slot and zeros elsewhere. Then Theorem \ref{cor:operator_DS} reduces to Schur's well-known result that the diagonal of a Hermitian matrix is majorized by its eigenvalue vector. Additionally, the above result solves a problem posed in \cite{gowda-jeong-shukla1} that asks whether $\bfA\odot \bfJ$ is $\lambda$-doubly stochastic, where $\bfJ$ is a $\JF$-matrix.
\end{remark}

In the following results, we specify some conditions on $\bfA$ (and/or $\bfB$) for which the converse in the (first part of the) above theorem holds.

\begin{theorem}
    Consider a complete hyperbolic system $(\V, p, e)$ of degree $n > 1$. For a $\V$-matrix $\bfA = \big[a_1, a_2, \dots, a_n \big]$, suppose $\bfA \odot \bfB$ is a $\lambda$-doubly stochastic transformation for every $e$-doubly stochastic $\V$-matrix $\bfB$. Then $\sum_{i=1}^{n}a_i = e$. Moreover, $\bfA$ is $e$-doubly stochastic when $\dim(\V)>1$ and $a_k \in \lplus$ for all $k$. In particular, $\bfA$ is $e$-doubly stochastic when $\V$ has a Jordan frame.
\end{theorem}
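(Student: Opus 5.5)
Three things have to be shown: the columns of $\bfA$ sum to $e$; each column has trace one (given $\dim(\V)>1$ and $a_k\in\lplus$); and the hypotheses of the second part hold when $\V$ has a Jordan frame. In each case the plan is to test the assumption against well-chosen $e$-doubly stochastic $\bfB$.

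\textbf{Step 1: $\sum_i a_i = e$.} Take $\bfB=\bfE=[\tfrac1n e,\dots,\tfrac1n e]$. Then for every $x\in\V$,
\[ (\bfA\odot\bfE)(x)=\tfrac1n\tr(x)\,a, \qquad a\coloneq\sum_{i=1}^n a_i. \]
Put $x=e$. Since $\bfA\odot\bfE$ is $\lambda$-doubly stochastic, $\lambda(a)\prec\lambda(e)=\one$. Majorization by $\one$ forces $\lambda(a)=\one=\lambda(e)$, so $\lambda(a-e)=0$. Completeness of $p$ then gives $a=e$.

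\textbf{Step 2: $\tr(a_k)=1$.} Now assume $\dim(\V)>1$ and every $a_k\ge 0$. Trace preservation is available from the chain \eqref{convex to lambda to operator ds}: since $p$ is complete, each $\bfA\odot\bfB$ is operator doubly stochastic, so $\tr\big((\bfA\odot\bfB)(x)\big)=\tr(x)$. Writing $t_i\coloneq\tr(a_i)$, this reads
\[ \sum_i t_i\ip{b_i}{x}=\tr(x)\quad\text{for all } x. \]
I will use the $\bfB$ of Example \ref{example-average}: pick $0\ne c$ with $\tr(c)=0$, and take
\[ b_1=\tfrac1n e+\varepsilon c,\qquad b_2=\tfrac1n e-\varepsilon c,\qquad b_j=\tfrac1n e \ (j\ge 3). \]
Substituting gives
\[ \tfrac1n\Big(\sum_i t_i\Big)\tr(x)+\varepsilon\,(t_1-t_2)\ip{c}{x}=\tr(x). \]
Step 1 gives $\sum_i t_i=\tr(e)=n$, so this reduces to $(t_1-t_2)\ip{c}{x}=0$ for all $x$. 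Taking $x=c$, which is nonzero in the genuine inner product, yields $t_1=t_2$. Permuting the columns of $\bfB$ (which keeps it $e$-doubly stochastic) gives $t_i=t_j$ for all $i,j$. Hence each $t_i=1$, and $\bfA$ is $e$-doubly stochastic.

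This step is the main subtlety. Without the hypothesis $a_k\in\lplus$ nothing prevents negative directions, and without $\dim(\V)>1$ no nonzero trace-zero $c$ exists. The derivation of trace preservation itself is routine.

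\textbf{Step 3: the Jordan frame case.} Let $\{c_1,\dots,c_n\}$ be a Jordan frame and $\bfJ=[c_1,\dots,c_n]$. By Proposition \ref{orthonormality of JF} the frame is orthonormal, so $\dim(\V)\ge n>1$. It remains to show $a_k\ge0$.
\begin{itemize}
\item Since $\bfA\odot\bfJ$ is operator doubly stochastic, it is positive. Taking $x=c_k\in\lplus$ gives $(\bfA\odot\bfJ)(c_k)=\sum_i\ip{c_i}{c_k}a_i=a_k$.
\item Therefore $a_k\in\lplus$ for every $k$.
\end{itemize}
Step 2 then applies, and $\bfA$ is $e$-doubly stochastic.
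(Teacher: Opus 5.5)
Your proof is correct and follows the paper's argument. You test with $\bfE$ to get $\sum_i a_i=e$, use trace preservation together with completeness and a suitable $e$-doubly stochastic $\bfB$ to get $\tr(a_k)=1$, and evaluate $\bfA\odot\bfJ$ at $c_k$ to get $a_k\in\lplus$. The differences are only in detail: for the trace step the paper uses $\bfB_k$ built from some $b\in\Delta$ with $b\neq\tfrac1n e$ as in Example \ref{example-SG}, which gives $(\tr(a_k)-1)(nb-e)=0$ directly, rather than your $\pm\varepsilon c$ perturbation with pairwise comparisons; and for the Jordan frame step it uses $\lambda(a_k)\prec\lambda(c_k)$ rather than positivity of the induced operator.
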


\begin{proof}
    Since $p$ is assumed to be complete, $\bfA \odot \bfB$ is operator doubly stochastic (thus, positive, unital, and trace-preserving) for all $e$-doubly stochastic $\bfB$. In particular, as $ \bfE = \big[ \frac{1}{n} e, \frac{1}{n} e, \dots, \frac{1}{n} e \big]$ is $e$-doubly stochastic, $\bfA \odot \bfE$ is unital. Hence, 
    \[ e = (\bfA \odot \bfE)(e) = \frac{1}{n} \sum_{i=1}^{n} \ip{e}{e} a_i =  \sum_{i=1}^{n} a_i. \]

    Next, suppose $\bfB = \big[ b_1, b_2, \ldots, b_n \big]$ is $e$-doubly stochastic. Then, $\bfA \odot \bfB$ is trace-preserving and so
    \[ \tr(x) = \tr \big( (\bfA \odot \bfB) x \big) = \sum_{i=1}^{n} \ip{b_i}{x} \tr(a_i) = \ip{\sum_{i=1}^{n} \tr(a_i) b_i}{x}, \]
    for any $x \in \V$; consequently,  by the completeness of $p$,
    \begin{equation} \label{eq:odot_converse}
        \sum_{i=1}^{n} \tr(a_i) b_i = e.
    \end{equation}

    Now, assume that $\dim(\V)>1$ and $a_k \in \lplus$ for all $k$. As $\dim(\V)>1$, from the Remark stated after Proposition \ref{delta is compact convex}, we have $\Delta \neq \{ \frac{1}{n}e \}$. Fix $b \in \Delta$ with $b \neq \frac{1}{n}e$ and consider $\bfB_1 = \big[ b, \frac{1}{n-1}(e - b), \dots, \frac{1}{n-1}(e - b) \big]$, which is $e$-doubly stochastic as noted in Example \ref{example-SG}. Thus, \eqref{eq:odot_converse} holds for this $\bfB_1$. Using the fact that $\sum_{i=1}^{n} \tr(a_i) = \tr(e) = n$, we get
    \[ e = \tr(a_1) b + \sum_{i=2}^{n}  \frac{\tr(a_i)}{n - 1}(e - b) = \tr(a_1) b + \frac{n - \tr(a_1)}{n - 1}(e - b). \]
    This simplifies to $(\tr(a_1) - 1)(nb - e) = 0$. Since $b \neq \frac{1}{n}e$, we get $\tr(a_1) = 1$. By repeating this argument with $\bfB \coloneq \bfB_k$, whose $k$th `column' is $b$ and $\frac{1}{n-1}(e - b)$ elsewhere, we get $\tr(a_k) = 1$ for all $k$. By our assumption, $a_k\in \lplus$ for all $k$. Thus, $\bfA$ is $e$-doubly stochastic.

    Now suppose $\V$ has a Jordan frame, say $\{c_1, c_2, \dots, c_n\}$, so that $\bfJ = \big[ c_1, c_2, \dots, c_n \big]$ is a $\JF$-matrix. Again, as $n > 1$, it follows that $\dim(\V) > 1$ by Proposition \ref{orthonormality of JF}. Moreover, for each $k$, the orthonormality of a Jordan frame gives
    \[ (\bfA \odot \bfJ)(c_k) = \sum_{i=1}^{n} \ip{c_i}{c_k} a_i = a_k \]
    Then, as $\bfA \odot \bfJ$ is $\lambda$-doubly stochastic, $\lambda(a_k) = \lambda \big( (\bfA \odot \bfJ)(c_k) \big) \prec \lambda(c_k) = (1, 0, \dots, 0)^{T}$. This implies $\lambda(a_k) \geq 0$; hence $a_k \in \lplus$ for all $k$.
\end{proof}

\begin{remark}
    The above result may fail without the assumption that $\dim(\V) > 1$. For example, consider $\V = \R$, $p(x) = x^2$, and $e = 1$ so that $\Delta = \{ \frac{1}{2} \}$ and $\tr(x) = 2x$. In this system, the only $e$-doubly stochastic matrix is $\bfB = \big[ \frac{1}{2}, \frac{1}{2} \big]$. The $\V$-matrix $\bfA = [1, 0]$ is clearly not $e$-doubly stochastic, yet $\bfA \odot \bfB$ is $\lambda$-doubly stochastic.
\end{remark}

\begin{theorem}
    Consider a hyperbolic system $(\V, p, e)$ having a Jordan frame with $p$ complete and $\lplus = \lplus^{\ast}$. Suppose  $\bfB = \big[ b_1, b_2, \dots, b_n \big]$ is a $\V$-matrix such that $\bfA \odot \bfB$ is a $\lambda$-doubly stochastic transformation for every $e$-doubly stochastic $\V$-matrix $\bfA$. Then $\bfB$ is $e$-doubly stochastic.
\end{theorem}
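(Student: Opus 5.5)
Fix a Jordan frame $\{c_1, c_2, \dots, c_n\}$ and put $\bfJ = \big[ c_1, c_2, \dots, c_n \big]$. This $\JF$-matrix is $e$-doubly stochastic, and by Proposition \ref{orthonormality of JF} we have $\bfJ \ast \bfJ = I_n$. The plan is to reduce the statement to the converse part of Theorem \ref{majorization for v-matrices2}. That converse applies because $p$ is complete and $\lplus = \lplus^{\ast}$. So it suffices to prove
\[ \bfB^{T} x \prec \lambda(x) \quad \text{for all } x \in \V. \]

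Take $\bfA = \bfJ$ in the hypothesis; then $\bfJ \odot \bfB$ is $\lambda$-doubly stochastic. For $x \in \V$ set $\bfr \coloneq \bfB^{T} x$, so that $(\bfJ \odot \bfB)(x) = \bfJ \bfr = \sum_{i=1}^{n} r_i c_i$. By the remark following Theorem \ref{majorization for v-matrices1} (i.e., \cite[Theorem 4.16]{gowda-jeong-shukla1}), $\lambda(\bfJ \bfr) = \bfr^{\downarrow}$. Hence
\[ (\bfB^{T} x)^{\downarrow} = \lambda\big( (\bfJ \odot \bfB)(x) \big) \prec \lambda(x), \]
and since majorization depends only on the decreasing rearrangement, $\bfB^{T} x \prec \lambda(x)$. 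Theorem \ref{majorization for v-matrices2} then gives that $\bfB$ is $e$-doubly stochastic. Only the single $e$-doubly stochastic matrix $\bfJ$ is actually used.

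The main point to check is the identity $\lambda(\bfJ \bfr) = \bfr^{\downarrow}$ for a Jordan frame in a general hyperbolic system. If one prefers not to quote it, it follows from results already in the paper. Theorem \ref{majorization for v-matrices1} gives $\lambda(\bfJ \bfr) \prec \bfr$. For the reverse majorization, apply Theorem \ref{majorization for v-matrices2} to $\bfJ$ with $x = \bfJ \bfr$; orthonormality gives $\bfJ^{T} \bfJ \bfr = \bfr$, so $\bfr \prec \lambda(\bfJ \bfr)$. Two vectors that majorize each other have the same decreasing rearrangement, hence $\lambda(\bfJ \bfr) = \bfr^{\downarrow}$. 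With this identity in hand, the rest is a direct chain of the cited results.
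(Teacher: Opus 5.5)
Your proposal is correct and follows the paper's proof. The paper also takes $\bfA=\bfJ$, uses $\lambda(\bfJ\bfr)=\bfr^{\downarrow}$ (quoted from \cite[Theorem 4.16]{gowda-jeong-shukla1}) to get $\bfB^{T}x\prec\lambda(x)$, and then applies the converse in Theorem \ref{majorization for v-matrices2}. Your self-contained derivation of that identity is also valid: Theorem \ref{majorization for v-matrices1} gives $\lambda(\bfJ\bfr)\prec\bfr$, Theorem \ref{majorization for v-matrices2} together with $\bfJ\ast\bfJ=I_n$ gives $\bfr\prec\lambda(\bfJ\bfr)$, and the two together give equality of decreasing rearrangements. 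This removes the dependence on the external citation.
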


\begin{proof}Assume that the conditions are in place. Let $\bfJ = \big[ c_1, c_2, \dots, c_n \big]$ be a $\JF$-matrix (coming from a Jordan frame in $\V$). Since $\bfJ$ is $e$-doubly stochastic, $\bfJ \odot \bfB$ is $\lambda$-doubly stochastic. Hence, for every $x \in \V$, by \cite[Theorem 4.16]{gowda-jeong-shukla1}, 
    \[ \big[ \bfB^{T}x \big]^{\downarrow}=\lambda \big( \bfJ (\bfB^{T}x) \big)=\lambda \big( (\bfJ \odot \bfB) x \big) \prec \lambda(x). \]
    It follows that $\bfB^{T} x \prec \lambda(x)$ for all $x \in \V$. Since $p$ is assumed to be complete and $\lplus = \lplus^{\ast}$, by Theorem \ref{majorization for v-matrices2}, $\bfB$ is $e$-doubly stochastic.
\end{proof}

\begin{remark}
    In the above theorem, the existence of a Jordan frame cannot be dropped. To see this, consider $\V = \R$, $p(x) = x^2$, and $e = 1$. Then $(\V, p, e)$ is a complete hyperbolic system with $\lambda(x) = (x, x)^{T}$ and $\lplus = \lplus^{\ast} = \R_+$. In this system, the only $e$-doubly stochastic $\V$-matrix is $\bfA = \big[ \frac{1}{2} , \frac{1}{2} \big]$. Now, consider the $\V$-matrix $\bfB = \big[ 1, 0 \big]$ which is not $e$-doubly stochastic. However, $\bfA \odot \bfB$ is the identity transformation, hence $\lambda$-doubly stochastic.
\end{remark}

\section{The set of \texorpdfstring{$e$}{e}-doubly stochastic \texorpdfstring{$\V$}{V}-matrices}

Given a hyperbolic system $(\V, p, e)$ of degree $n$, let $\V^n$ denote the Cartesian product of $n$-copies of $\V$. When $p$ is complete, $\V$ carries the inner product induced by $\lambda$; correspondingly, $\V^n$ becomes a Hilbert space. In this setting, we let 
\begin{align*}
    \DS_n(\V) &\coloneq \set{\bfA = \big[ a_1, a_2, \dots, a_n \big]}{\text{$\bfA$ is $e$-doubly stochastic}}, \\
    \JF_n(\V) &\coloneq \set{\bfC = \big[ c_1, c_2, \dots, c_n \big]}{\text{$\bfC$ is a $\JF$-matrix}}, \\
    \GJF_n(\V) &\coloneq \set{\bfC = \big[ c_1, c_2, \dots, c_n \big]}{\text{$\bfC$ is a $\GJF$-matrix}}.
\end{align*}
Let $\Ext(\mathrm{DS}_n(\V))$ denote the set of all extreme points of the convex set $\mathrm{DS}_n(\V)$.

\begin{theorem} \label{thm:doubly stochastic transformations}
    In a complete hyperbolic system $(\V, p, e)$, the following statements hold:
    \begin{itemize}
        \item[$(a)$] $\DS_n(\V)$ is nonempty, compact and convex in $\V^n$.
        \item[$(b)$] $\JF_n(\V) = \set{\bfA \in \DS_n(\V)}{\exists\, \bfB \in \DS_n(\V), \, \bfA \ast \bfB = I_{n}}$. 
        \item[$(c)$] $\JF_n(\V)\subseteq \GJF_n(\V) \subseteq \Ext( \DS_n(\V))$.
    \end{itemize}
\end{theorem}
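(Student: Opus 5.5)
The plan is to prove the three parts separately: (a) is topological and convex bookkeeping, (b) rests on the inequality \eqref{semi-ftvn condition}, and (c) is an induction along the construction in Example \ref{example via extreme points}.

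For $(a)$, I would note that $\DS_n(\V)$ contains $\bfE = \big[ \tfrac{1}{n}e, \dots, \tfrac{1}{n}e \big]$, so it is nonempty. It is the intersection of $\Delta^n$ with the affine set $\set{\bfA}{a_1 + \dots + a_n = e}$. Since $p$ is complete, Proposition \ref{delta is compact convex} shows that $\Delta$ is compact and convex, hence so is $\Delta^n$ in $\V^n$. The affine constraint is closed and convex, so $\DS_n(\V)$ is compact and convex.

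For $(b)$, the inclusion ``$\subseteq$'' follows by taking $\bfB = \bfA$ for a $\JF$-matrix $\bfA$. Such an $\bfA$ lies in $\DS_n(\V)$ by the proposition preceding Example \ref{GJF example 1} (every $\JF$-matrix is a $\GJF$-matrix, hence $e$-doubly stochastic). Moreover $\bfA \ast \bfA = I_n$ by Proposition \ref{orthonormality of JF}.

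For the reverse inclusion, suppose $\bfA, \bfB \in \DS_n(\V)$ with $\bfA \ast \bfB = I_n$. Then $\ip{a_i}{b_i} = 1$ for each $i$. By \eqref{semi-ftvn condition},
\[ 1 = \ip{a_i}{b_i} \leq \ip{\lambda(a_i)}{\lambda(b_i)} = \sum_j \lambda_j(a_i)\lambda_j(b_i) \leq \lambda_1(b_i)\sum_j \lambda_j(a_i) = \lambda_1(b_i) \leq 1. \]
Here we use that $\lambda(a_i), \lambda(b_i)$ are nonnegative with entry sum $1$. Equality forces $\lambda_1(b_i) = 1$, so $\lambda(b_i) = (1,0,\dots,0)^T$. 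Then $\ip{\lambda(a_i)}{\lambda(b_i)} = \lambda_1(a_i) = 1$, so $\lambda(a_i) = (1,0,\dots,0)^T$ as well. Combined with $\sum_i a_i = e$, this shows $\{a_1, \dots, a_n\}$ is a Jordan frame, so $\bfA \in \JF_n(\V)$.

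For $(c)$, the first inclusion is the earlier proposition that every $\JF$-matrix is a $\GJF$-matrix. For the second, let $\bfA$ be a $\GJF$-matrix. Permuting columns is a linear bijection of $\DS_n(\V)$ onto itself, so it preserves extreme points; we may therefore assume $\bfA$ comes directly from Example \ref{example via extreme points}, i.e., $a_1 \in \Ext(\Delta)$ and $a_{k+1} \in \Ext(\Delta_k)$. Suppose $\bfA = t\bfB + (1-t)\bfC$ with $\bfB, \bfC \in \DS_n(\V)$ and $0 < t < 1$. We show $b_k = c_k = a_k$ by induction on $k$.

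First, $b_1, c_1 \in \Delta$ and $a_1$ is extreme in $\Delta$, so $b_1 = c_1 = a_1$. Next, assume $b_j = c_j = a_j$ for $j \leq k$, where $k \leq n-1$. Applying \eqref{useful inequality} to $\bfB$ gives
\[ 0 \leq b_{k+1} \leq e - (b_1 + \dots + b_k) = e - (a_1 + \dots + a_k), \]
and $b_{k+1} \in \Delta$, so $b_{k+1} \in \Delta_k$; likewise $c_{k+1} \in \Delta_k$. Extremality of $a_{k+1}$ in $\Delta_k$ then yields $b_{k+1} = c_{k+1} = a_{k+1}$. Alternatively, at the last step one can simply note $b_n = e - \sum_{j<n} b_j = a_n$. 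Hence $\bfB = \bfC = \bfA$, and $\bfA$ is extreme.

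The only delicate point is making sure \eqref{useful inequality} applies to the arbitrary $\bfB, \bfC$ in $\DS_n(\V)$; it does, since it was derived for any $e$-doubly stochastic matrix, with $n > 1$ used for the $\Delta_k$ (the case $n = 1$ is trivial since $\DS_1(\V) = \{[\,e\,]\}$).
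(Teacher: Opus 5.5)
Your proposal is correct. Parts $(a)$ and $(c)$ follow the paper's proof. The differences there are cosmetic: the paper checks closedness and boundedness of $\DS_n(\V)$ directly instead of citing Proposition~\ref{delta is compact convex}, and it tests extremality with midpoints. Your argument that column permutations preserve extreme points supplies the ``without loss of generality'' step that the paper leaves implicit.

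The genuine difference is in the reverse inclusion of $(b)$.
\begin{itemize}
\item The paper applies Theorem~\ref{majorization for v-matrices2} with $x = b_k$. The $k$th column of $\bfA \ast \bfB$ is a coordinate vector, so $\bfA^{T} b_k \prec \lambda(b_k)$ forces $\lambda(b_k) = (1,0,\dots,0)^{T}$, and hence $\bfB$ is a $\JF$-matrix. The paper then swaps the roles of $\bfA$ and $\bfB$, using $\bfB \ast \bfA = (\bfA \ast \bfB)^{T} = I_n$, to get the same conclusion for $\bfA$.
\item You instead put the diagonal entry $\ip{a_i}{b_i}=1$ directly into \eqref{semi-ftvn condition} and squeeze $1 \leq \ip{\lambda(a_i)}{\lambda(b_i)} \leq \lambda_1(b_i) \leq 1$. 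This determines $\lambda(a_i)$ and $\lambda(b_i)$ at the same time, with no role swap.
\end{itemize}
Your route is more elementary and self-contained. It uses only Proposition~\ref{bauschke et al}, not the majorization \eqref{lambda sum majorized by onek} imported from earlier work, and it makes plain that only the diagonal of $\bfA \ast \bfB$ is needed. The paper's route instead presents $(b)$ as a direct consequence of the Schur-type majorization $\bfA^{T}x \prec \lambda(x)$, which ties it to the theme of Section~5.
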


\textit{Note}: While $\JF_n(\V)$ may be empty, the set $ \GJF_n(\V)$ is always nonempty; the second inclusion in $(c)$ may be proper; see examples below. As $\V$ is finite dimensional, by the well-known theorem of Minkowski, see \cite[Theorem 17.2]{rockafellar}, 
\[ \conv \! \big(\Ext(\mathrm{DS}_n(\V)) \big)=\mathrm{DS}_n(\V). \]

\begin{proof}
    $(a)$ The nonemptiness comes from Example \ref{n=1 case}. The convexity of $\mathrm{DS}_n(\V)$ is easily verified. Since $p$ is complete, $\V$ is a (finite-dimensional) Hilbert space under the inner product in \eqref{ip}; consequently, $\V^n$ is a Hilbert space. By the continuity of the trace functional, it is easy to see that $\mathrm{DS}_n(\V)$ is closed in $\V^n$. If $a\in \lplus$ with trace one, then
    \[ \norm{a}^2 = \norm{\lambda(a)}^2 \leq \Big( \sum_{i=1}^{n}\lambda_i(a) \Big)^2=1. \]
    It follows that $\mathrm{DS}_n(\V)$ is a bounded set in $\V^n$; consequently, $\mathrm{DS}_n(\V)$ is compact.
    
    $(b)$ Clearly, $\JF_n(\V) \subseteq \DS_n(\V)$. If $\bfA \in \JF_n(\V)$, then $\bfA \ast \bfA = I_{n}$ as every Jordan frame is orthogonal; see Proposition \ref{orthonormality of JF}. So we have one inclusion in $(b)$. To see the reverse inclusion, suppose $\bfA \in \DS_n(\V)$ such that for some $\bfB = \big[ b_1, b_2, \dots, b_n \big] \in \DS_n(\V)$, $\bfA \ast \bfB = I_{n}$. 
    From Theorem \ref{majorization for v-matrices2}, $\bfA^{T} x \prec \lambda(x)$ for all $x \in \V$. As $\ip{a_1}{b_1} = 1$ and $\ip{a_i}{b_1} = 0$ for all $i \neq 1$, we have 
    \[ (1, 0, \dots, 0)^{T} = \bfA^{T} b_1 \prec \lambda(b_1). \]
    Since $b_1 \in \lplus$ with $\tr(b_1) = 1$, this yields $\lambda(b_1) = (1, 0, \dots, 0)^{T}$. A similar argument shows that $\lambda(b_k) = (1, 0, \dots, 0)^{T}$ for all $k$. As $\bfB$ is $e$-doubly stochastic, $b_1 + b_2 + \dots + b_n = e$. Thus, $\{b_1, b_2, \dots, b_n\}$ forms a Jordan frame. Reversing the roles of $\bfA$ and $\bfB$, we see that $\bfA$ also comes from a Jordan frame. Thus, we have the equality in $(b)$.

    $(c)$ The inclusion $\JF_n(\V)\subseteq \GJF_n(\V)$ has been observed in an earlier remark. Now let $\bfC = \big[ c_1, c_2, \dots, c_n \big] \in \GJF_n(\V)$; without loss of generality, we assume that $\bfC$ comes from the construction in Example \ref{example via extreme points}. We show that $\bfC$ is an extreme point of $\DS_n(\V)$. Suppose
    \[ \bfC = \frac{1}{2}(\bfA + \bfB), \]
    where $\bfA = \big[ a_1, a_2, \dots, a_n \big] \in \mathrm{DS}_n(\V)$ and $\bfB =\big[ b_1, b_2, \dots, b_n \big] \in \mathrm{DS}_n(\V)$. Then $c_i = \frac{1}{2}(a_i + b_i)$ for all $1 \leq i \leq n$. Now, as $\bfC \in \GJF_n(\V)$, $c_1\in \Ext(\Delta)$. Since $a_1,b_1\in \Delta$, we must have $c_1=a_1=b_1$. Then, from \eqref{useful inequality},
    \[ 0 \leq a_2 \leq e - a_1 = e - c_1 \;\; \text{and} \;\; 0 \leq b_2 \leq e - b_1 = e - c_1 \] 
    and so, $a_2, b_2 \in \Delta(c_1)$. Since $c_2$ is an extreme point of $\Delta(c_1)$, we must have $c_2 = a_2 = b_2$. By continuing this process (or by induction), $c_i = a_i = b_i$ for every $1 \leq i \leq n$. This proves that $\bfC = \bfA = \bfB$. Thus, $\bfC$ is an extreme point of $\DS_n(\V)$. 
\end{proof}

\begin{example} (The classical case) \\
    In the classical case of $\V = \Rn$ with $p(x) = x_1 x_2 \cdots x_n$, and $e = \one$, $\mathrm{DS}_n(\Rn)$ is the set of all doubly stochastic matrices. Since there is only one Jordan frame (up to permutation), $\Ext(\mathrm{DS}_n(\Rn))$ is the set of all permutation matrices. In this case, thanks to the result of Birkhoff, see the Introduction, we have equalities in Item $(c)$ of the above theorem.
\end{example}

\begin{example} (The Jordan spin algebra) \\
    On $\Rn$ with $n > 1$, consider the Euclidean Jordan algebra $\Ln$ (familiarly known as the Jordan spin algebra); see \cite{faraut-koranyi, gowda-sznajder-tao}. Writing any element $x\in \Rn$ in the form $x=(x_0, \overline{x})^{T}$, where $\overline{x} = (x_1, x_2, \dots, x_{n-1})$, we define the polynomial 
    \[ p(x) \coloneq x_0^2 - \norm{\overline{x}}_2^2, \]
    which is hyperbolic in the direction $e \coloneq (1, 0, \dots, 0)^{T}$. In $\Ln$, now identified as the hyperbolic system $ (\Rn, p, e)$, the eigenvalue map $\lambda(x) = \big( x_0 + \norm{\overline{x}}_2, x_0 - \norm{\overline{x}}_2 \big)^{T}$ induces the hyperbolicity cone $\lplus = \set{x \in \Rn}{x_0 \geq \norm{\overline{x}}_2}$. Moreover, from \eqref{ip}, we have $\ip{x}{y} = 2\, x^{T} y$. Consequently, $\tr(x) = 2x_0$ and an element in $\lplus$ with trace one is of the form 
    \[ a = \frac{1}{2}(1, \overline{a})^{T}, \text{ where }  \norm{\overline{a}}_2 \leq 1. \] 
    Such an $a$ becomes a primitive idempotent if $\norm{\overline{a}}_2 = 1$.
    
    Now, let $\bfA = \big[ a_1, a_2 \big]$ be $e$-doubly stochastic. Then, $a_1 + a_2 = e$ implies that $a_1 = \frac{1}{2}(1, \overline{a})^{T}$ and $a_2 = \frac{1}{2}(1, -\overline{a})^{T}$ for some $\overline{a} \in \R^{n-1}$ with $\norm{\overline{a}}_2 \leq 1$. In $\R^{n-1}$, we can write $\overline{a} = \alpha u + \beta v$, where $\alpha, \beta \geq 0$, $\alpha + \beta = 1$, and $\norm{u}_2 = \norm{v}_2 = 1$.
    Letting 
    \[ e_1 \coloneq \frac{1}{2}(1, u)^{T}, \; e_2 \coloneq \frac{1}{2}(1,-u)^{T}, \;\; \text{and} \;\;
    f_1 \coloneq \frac{1}{2}(1, v)^{T}, \; f_2 \coloneq \frac{1}{2}(1,-v)^{T}, \] 
    we easily verify that $\{e_1, e_2\}$ and $\{f_1, f_2\}$ are Jordan frames in this Euclidean Jordan algebra; moreover, $\bfA = \big[ a_1, a_2 \big] = \alpha \big[ e_1, e_2 \big] + \beta \big[ f_1, f_2 \big]$, which is a convex combination of extreme points of $\DS_2(\Ln)$. Thus, in this setting of $\Ln \equiv (\Rn, p, e)$, 
    \[ \Ext(\DS_2(\Ln)) = \set{\bfA = \big[ e_1, e_2 \big]}{\text{$\{e_1, e_2\}$ is a Jordan frame in $\Ln$}} = \JF_2(\Ln). \]
\end{example}

\begin{example} (The algebras $\mathcal{S}^2$ and $\mathcal{H}^2$) \\
    The Euclidean Jordan algebra $\mathcal{S}^2$ is isomorphic to $\mathcal{L}^3$. Based on the result in the previous example, we can describe the extreme points of $\DS_2(\mathcal{S}^2)$. A different proof methodology yields the following result for $\DS_2(\mathcal{H}^2)$:
    \[ \Ext(\DS_2(\mathcal{H}^2)) = \set{\bfA = \big[ uu^{\ast}, vv^{\ast} \big]}{ u, v \in \C^2, \norm{u} = \norm{v} = 1, u \perp v} = \JF_2(\mathcal{H}^2). \]
    This is seen as follows. Clearly, $\JF_2(\mathcal{H}^2) \subseteq \Ext(\DS_2(\mathcal{H}^2))$. To see the other inclusion, let $\bfA = \big[ A_1, A_2 \big]$ be $e$-doubly stochastic in $\mathcal{H}^2$. As $A_1$ and $A_2$ are positive semidefinite with trace one and $A_1 + A_2=I$, there is a unitary matrix $U$ with columns $u$ and $v$ which simultaneously diagonalizes $A_1$ and $A_2$. Easy calculations and simplifications show that 
    \[ U^{\ast} A_1 U = \begin{bmatrix} \alpha & 0 \\ 0 & \beta \end{bmatrix} \;\; \text{and} \;\; U^{\ast} A_2 U = \begin{bmatrix} \beta & 0 \\ 0 & \alpha \end{bmatrix}\!, \]
    where $\alpha, \beta \geq 0$ and $\alpha + \beta = 1$. Then, $A_1 = \alpha u u^{\ast} + \beta v v^{\ast}$ and $A_2 = \beta uu^{\ast} + \alpha vv^{\ast}$. We see that $\bfA = \alpha \big[ uu^{\ast}, vv^{\ast} \big] + \beta \big[vv^{\ast}, uu^{\ast} \big]$. Since $\big[ uu^{\ast}, vv^{\ast} \big], \big[ vv^{\ast}, uu^{\ast} \big] \in \JF_2(\mathcal{H}^2)$, $\bfA$ is a convex combination of two matrices in $\JF_2(\mathcal{H}^2)$. Hence every extreme point of $\DS_2(\mathcal{H}^2)$ is in $\JF_2(\mathcal{H}^2)$.
\end{example}

As observed in Proposition \ref{In EJA, JF=GJF}, in the setting of a Euclidean Jordan algebra, $\JF_n(\V) = \GJF_n(\V)$. We now provide examples to show that $\DS_n(\V)$ may have extreme points other than those coming from $\GJF_n(\V)$.

\begin{example} \label{product EJA example}
    Consider a Euclidean Jordan algebra $\V$ of rank $n$ and unit $e$, and carrying the trace inner product. We assume that there is a set $\{p_1, p_2, \dots, p_{2n}\}$
    of primitive idempotents that is linearly independent with 
    \[ p_1 + p_2 + \dots + p_{2n} = 2e. \]
    (For such a set in $\Ln$ and $\mathcal{H}^2$; see the remark below and the next example.)
    
    Let $\W \coloneq \V \times \V$ be the product Euclidean algebra (which has rank $2n$ with $(e, e)$ as the unit element). In $\W$, let 
    \[ a_i \coloneq \frac{1}{2}(p_i, p_i) \text{ for all } 1\leq i \leq 2n \]
    and $\bfA \coloneq \big[a_1, a_2, \dots, a_{2n} \big]$. We now claim that $\bfA \in \Ext(\DS_{2n}(\W))$, but $\bfA \notin \GJF_{2n}(\W)$.  We verify these in the following.
    
    $(i)$ In $\W$, $a_i \geq 0$ and $\tr(a_i) = 1$ for all $i$; moreover, $a_1 + a_2 + \dots + a_{2n} = (e, e)$. Thus, $\bfA$ is $e$-doubly stochastic. \\
    $(ii)$ We note that for each $i$, $(p_i, 0)$ and $(0, p_i)$ belong to $\Delta(\W)$ and $a_i$ is the average of these two elements. Hence, $a_i \not \in \Ext(\Delta(\W))$ for every $i$. It follows that $\bfA \notin \GJF_{2n}(\W)$. (Note that when $\bfA \in \GJF_{2n}(\W)$, we must have  $a_i\in \Ext(\Delta(\W))$ for some $i$.)\\
    $(iii)$ Suppose $\bfA = \frac{1}{2}(\bfB + \bfC)$, where $\bfB, \bfC \in \DS_{2n}(\W)$. Write $\bfB = \big[ b_1, b_2, \dots, b_{2n} \big]$ and $\bfC = \big[ c_1, c_2, \dots, c_{2n} \big]$, where 
    $b_i = (b_{i1}, b_{i2}), c_i = (c_{i1}, c_{i2})$ for each $i = 1, 2, \dots, 2n$. We show that $b_{i1} = c_{i1} = \frac{1}{2}p_i$ and $b_{i2} = c_{i2} = \frac{1}{2}p_i$ for all $i$. \\
    First, fix an index $i$, $1 \leq i \leq 2n$, and consider $a_i = \frac{1}{2}(b_i + c_i)$. Then (by considering the first components) $p_i = b_{i1} + c_{i1}$. As $b_{i1}, c_{i1} \geq 0$ in $\V$ and $p_i$ is a primitive idempotent (so is an extreme direction of the symmetric cone in $\V$), $b_{i1} = t_i p_i$ and $c_{i1} = (1-t_i) p_i$ for some scalar $0 \leq t_i \leq 1$. Now,
    \[ \frac{1}{2} \sum_{i=1}^{2n} p_i = e = \sum_{i=1}^{2n} b_{i1} = \sum_{i=1}^{2n} t_i p_i. \]
    By the imposed linear independence of the $p_i$s, we see that $t_i = \frac{1}{2}$ for all $i$. This implies that $b_{i1} = \frac{1}{2}p_i$. A similar statement holds for $b_{i2}$. We conclude that $b_i = (b_{i1}, b_{i2}) = \frac{1}{2}(p_i, p_i) = a_i$. As this holds for all $i$, we have $\bfB = \bfA$. Similarly, $\bfC = \bfA$. Thus, $\bfA$ is an extreme point.
\end{example}

In the above example, we constructed an extreme point of $\DS_{2n}(\W)$ that is not in $\GJF_{2n}(\W)$. Here, $\W$ is a product (so non-simple) Euclidean Jordan algebra. In what follows, we construct an explicit example in the simple Euclidean Jordan algebra $\mathcal{H}^4$. (While we skip the details, we can carry out a similar construction in ${\cal S}^6$. The proof technique used below is reminiscent of a similar construction in \cite{bapat}.)
In preparation for the example, first we state the following lemmas, where for a matrix $A$ in $\Hn$, we write $A\geq 0$ when $A$ is positive semidefinite.

\begin{lemma} \label{lemma1}
    Let $A, B \in \Hn$. If $A \geq 0$ and there exists $\varepsilon > 0$ such that $A \pm \varepsilon B \geq 0$, then $\ker(A) \subseteq \ker(B)$.
\end{lemma}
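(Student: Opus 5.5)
The plan is to test the quadratic form of $A \pm \varepsilon B$ against vectors in the kernel of $A$. Fix $u \in \ker(A)$, so $Au = 0$. Since $A + \varepsilon B \geq 0$ and $A - \varepsilon B \geq 0$,
\[ 0 \leq u^{\ast}(A \pm \varepsilon B)u = \pm\varepsilon\, u^{\ast}Bu. \]
Because $\varepsilon > 0$, this forces $u^{\ast}Bu = 0$.

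It remains to pass from $u^{\ast}Bu = 0$ to $Bu = 0$. The key observation is that $u$ is then a zero of the quadratic form of the positive semidefinite matrix $M \coloneq A + \varepsilon B$. Indeed,
\[ u^{\ast}Mu = u^{\ast}Au + \varepsilon\, u^{\ast}Bu = 0. \]
For a positive semidefinite matrix, a vanishing quadratic form forces the vector into the kernel. Write $M = M^{1/2}M^{1/2}$; then $\|M^{1/2}u\|^2 = u^{\ast}Mu = 0$, so $M^{1/2}u = 0$ and hence $Mu = 0$. An alternative route is the Cauchy--Schwarz inequality for the semi-inner product $(x,y) \mapsto y^{\ast}Mx$, which gives $|v^{\ast}Mu|^2 \leq (v^{\ast}Mv)(u^{\ast}Mu) = 0$ for every $v$.

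Having $Mu = 0$, we conclude
\[ 0 = (A + \varepsilon B)u = Au + \varepsilon Bu = \varepsilon Bu. \]
Hence $Bu = 0$, that is, $u \in \ker(B)$. Since $u \in \ker(A)$ was arbitrary, $\ker(A) \subseteq \ker(B)$.

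There is no real obstacle. The only point needing care is the standard step that $u^{\ast}Mu = 0$ implies $Mu = 0$ for a positive semidefinite Hermitian $M$, and I would justify it with the square-root argument above. Only one of the two inequalities $A \pm \varepsilon B \geq 0$ is needed for that final step; both are used to get $u^{\ast}Bu = 0$.
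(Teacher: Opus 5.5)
Your proof is correct and follows the paper's argument essentially verbatim: evaluate the quadratic form of $A \pm \varepsilon B$ at a vector $x \in \ker(A)$ to get $x^{\ast}Bx = 0$, then use positive semidefiniteness to conclude $(A \pm \varepsilon B)x = 0$, and hence $Bx = 0$. Your square-root argument makes explicit the standard fact, which the paper uses without comment, that a positive semidefinite matrix annihilates any vector at which its quadratic form vanishes.
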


\begin{proof}
    Suppose $x \in \ker(A)$, i.e., $Ax = 0$. Then
    \[ 0 \leq x^{\ast} (A \pm \varepsilon B) x = x^{\ast}Ax \pm \varepsilon x^{\ast} Bx = \pm \varepsilon x^{\ast} Bx. \]
    Thus, we get $x^{\ast}Bx = 0$ and $x^{\ast}(A \pm \varepsilon B)x = 0$. Since $A \pm \varepsilon B \geq 0$, we must have $(A \pm \varepsilon B)x = 0$. As $Ax=0$, we must have $Bx = 0$, that is, $x \in \ker(B)$. 
\end{proof}

\begin{lemma}\label{lemma2}
    In $\C^n$ with the usual inner product, let $u \in \C^n$ be a unit vector and $D \in \C^{n \times n}$. If $Dw = D^{\ast}w = 0$ for all $w \perp u$, then $D$ is a (complex) multiple of $uu^{\ast}$.
\end{lemma}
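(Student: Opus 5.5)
We work in an orthonormal basis of $\C^n$ adapted to $u$. Extend $u$ to an orthonormal basis $\{u, w_2, \dots, w_n\}$ of $\C^n$, where each $w_k \perp u$. Let $U$ be the unitary matrix with these columns and set $M \coloneq U^{\ast} D U$, so that $M_{jk} = \ip{D U e_k}{U e_j}$, where $U e_1 = u$ and $U e_k = w_k$ for $k \geq 2$.

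First, use $Dw = 0$ for $w \perp u$. Since $D w_k = 0$ for $k \geq 2$, the columns $2, \dots, n$ of $M$ vanish: $M_{jk} = w_j'^{\ast} D w_k = 0$ for $k \geq 2$, writing $w_1' = u$ and $w_j' = w_j$.

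Next, use $D^{\ast} w = 0$. This gives $w_j^{\ast} D = (D^{\ast} w_j)^{\ast} = 0$ for $j \geq 2$, so the rows $2, \dots, n$ of $M$ vanish as well.

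Hence $M$ has at most one nonzero entry, $M_{11} = u^{\ast} D u \eqqcolon \mu \in \C$, so $M = \mu\, e_1 e_1^{T}$. Transforming back,
\[ D = U M U^{\ast} = \mu\, (U e_1)(U e_1)^{\ast} = \mu\, u u^{\ast}, \]
which is a complex multiple of $uu^{\ast}$.

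The argument can also be phrased without coordinates. For any $x \in \C^n$, decompose $x = \ip{x}{u} u + w$ with $w \perp u$. Then $Dx = \ip{x}{u} Du = (Du)\, u^{\ast} x$, so $D = (Du) u^{\ast}$. Similarly, applying the same decomposition to $D^{\ast}$ gives $D^{\ast} = (D^{\ast} u) u^{\ast}$, hence $D = u\,(D^{\ast}u)^{\ast}$. Comparing the two expressions, for every $w \perp u$ we get $w^{\ast} D u = w^{\ast} u\,(D^{\ast}u)^{\ast} u = 0$. Thus $Du$ is orthogonal to $u^{\perp}$, so $Du = \mu u$ with $\mu = u^{\ast} D u$, and therefore $D = \mu\, uu^{\ast}$.

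There is no real obstacle here. The only point needing care is that both hypotheses are required: $Dw = 0$ alone gives only a rank-one matrix of the form $(Du)u^{\ast}$, and the condition on $D^{\ast}$ is what forces $Du$ to be parallel to $u$.
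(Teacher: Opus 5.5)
Your proof is correct and is essentially the paper's argument: the paper also writes $x = \alpha u + v$ with $v \perp u$, uses $\ip{Du}{w} = \ip{u}{D^{\ast}w} = 0$ for $w \perp u$ to get $Du = \beta u$, and concludes $Dx = \beta (uu^{\ast})x$; your unitary change of basis is the matrix form of the same observation. In your coordinate-free version, the detour through $D = u(D^{\ast}u)^{\ast}$ is unnecessary, since $w^{\ast}Du = (D^{\ast}w)^{\ast}u = 0$ directly.
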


\begin{proof} 
    Suppose $Dw = D^{\ast}w = 0$ for all $w \perp u$. We first note that any element $x \in \C^n$ can be written as $x = \alpha u + v$ for some $v \perp u$ and $\alpha \in \C$. As $\ip{Du}{w} = \ip{u}{D^{\ast} w} = 0$ for all $w \perp u$, $Du$ is a multiple of $u$; so $Du = \beta u$ for some $\beta \in \C$. Since $u^{\ast}x = \ip{x}{u} = \alpha \ip{u}{u} = \alpha$ and $Dv = 0$,
    \[ Dx = \alpha Du + Dv = \alpha \beta u = \beta u (u^{\ast}x) = \beta (uu^{\ast})x. \]
    As $x \in \C^n$ is arbitrary, it follows that $D = \beta uu^{\ast}$.
\end{proof}

\begin{example} \label{eq: counterexample in H^4}
    Consider the Euclidean Jordan algebra $\V = \mathcal{H}^4$. We show that
    \[ \JF_4(\mathcal{H}^4)= \GJF_4(\mathcal{H}^4)\neq \Ext (\DS_4(\mathcal{H}^4)). \]
    
    The equality $\JF_4(\mathcal{H}^4)= \GJF_4(\mathcal{H}^4)$
    comes from Proposition \ref{In EJA, JF=GJF}. To show $\GJF_4(\mathcal{H}^4)\neq \Ext (\DS_4(\mathcal{H}^4))$, we proceed as follows. Consider the following matrices in $\mathcal{H}^2$ (each of the form $uu^{\ast}$, where $u$ is a unit vector):
    \begin{align*}
        P_1 &= \frac{1}{4} \begin{bmatrix} 3 & \sqrt{3} \\ \sqrt{3} & 1 \end{bmatrix}\!, &
        P_2 &= \frac{1}{4} \begin{bmatrix} 3 & -\sqrt{3} \\ -\sqrt{3} & 1 \end{bmatrix}\!, \\ 
        P_3 &= \frac{1}{4} \begin{bmatrix} 1 & -i\sqrt{3} \\ i\sqrt{3} & 3 \end{bmatrix}\!, & 
        P_4 &= \frac{1}{4} \begin{bmatrix} 1 & i\sqrt{3} \\ -i\sqrt{3} & 3 \end{bmatrix}\!.
    \end{align*}
    
    One  can check that 
    \[ P_k^2 = P_k, \,\,\tr(P_k) = 1,\, \text{and}\,\,P_1 + P_2 + P_3 + P_4 = 2I_2. \]
    Moreover, a simple computation shows that the Gram matrix $G=[G_{ij}]$, given by $G_{ij} = \tr(P_i P_j)$, has nonzero determinant. It follows that the set $\{P_1, P_2, P_3, P_4\}$ in $\mathcal{H}^2$ is linearly independent  (over both $\R$ and $\C$).
    
    Now let $\bfA = \big[ A_1, A_2, A_3, A_4 \big]$, where $A_k = \dfrac{1}{2} (P_k \oplus P_k)$. Explicitly,
    \begin{align*}
        A_1 &= \frac{1}{8} \begin{bmatrix} 3 & \sqrt{3} & 0 & 0 \\ \sqrt{3} & 1 & 0 & 0 \\ 0 & 0 & 3 & \sqrt{3} \\ 0 & 0 & \sqrt{3} & 1 \end{bmatrix}\!, &
        A_2 &= \frac{1}{8} \begin{bmatrix} 3 & -\sqrt{3} & 0 & 0 \\ -\sqrt{3} & 1 & 0 & 0 \\ 0 & 0 & 3 & -\sqrt{3} \\ 0 & 0 & -\sqrt{3} & 1 \end{bmatrix}\!, \\ 
        A_3 &= \frac{1}{8} \begin{bmatrix} 1 & -i\sqrt{3} & 0 & 0 \\ i\sqrt{3} & 3 & 0 & 0 \\ 0 & 0 & 1 & -i\sqrt{3} \\ 0 & 0 & i\sqrt{3} & 3 \end{bmatrix}\!, & 
        A_4 &= \frac{1}{8} \begin{bmatrix} 1 & i\sqrt{3} & 0 & 0 \\ -i\sqrt{3} & 3 & 0 & 0 \\ 0 & 0 & 1 & i\sqrt{3} \\ 0 & 0 & -i\sqrt{3} & 3 \end{bmatrix}\!.
    \end{align*}
    From the properties of $P_i$'s, we see that $\bfA$ is $e$-doubly stochastic over $\mathcal{H}^4$. Moreover, as $\frac{1}{2}$ is an eigenvalue of $A_1$, $\bfA\not\in \JF_4(\mathcal{H}^4)$.
    
    We now show that $\bfA\in \Ext(\DS_4(\mathcal{H}^4))$. Suppose there exist $\bfB = \big[ B_1, B_2, B_3, B_4 \big]$ and $\varepsilon > 0$ such that $\bfA \pm \varepsilon \bfB \in \DS_4(\mathcal{H}^4)$. We claim that $\bfB=0.$ \\
    Now, the conditions $\bfA, \bfA \pm \varepsilon \bfB \in \DS_4(\mathcal{H}^4)$ imply that
    \[ A_k \pm \varepsilon B_k \geq 0, \;\; \tr(B_k) = 0 \text{ for all } k, \;\; \text{and} \;\; \sum_{k=1}^{4} B_k = 0. \]
    We fix $k$ and, to simplify the notation, write $A$, $B$, and $P$ in place of $A_k$, $B_k$, and $P_k$, respectively. 
    
    Recall that $P = uu^{\ast}$ for some unit vector $u \in \C^2$. We now write $A$ and $B$ as $2 \times 2$ block matrices as follows:
    \[ A = \frac{1}{2} \begin{bmatrix}
        P & 0 \\ 0 & P
    \end{bmatrix} = \frac{1}{2} \begin{bmatrix}
        uu^{\ast} & 0 \\ 0 & uu^{\ast}
    \end{bmatrix}\!, \;\; B = \begin{bmatrix}
        B_{11} & B_{12} \\ B_{12}^{\ast} & B_{22}
    \end{bmatrix}\!, \]
    where $B_{11}, B_{22} \in \mathcal{H}^2$ and $B_{12} \in \C^{2 \times 2}$. Let $v \in \C^2$ be orthogonal to $u$. Then
    \[ A = \frac{1}{2} \begin{bmatrix}
        uu^{\ast} & 0 \\ 0 & uu^{\ast}
    \end{bmatrix} \implies A \begin{bmatrix} v \\ 0 \end{bmatrix} = \begin{bmatrix} 0 \\ 0 \end{bmatrix} \text{ and } A \begin{bmatrix} 0 \\ v \end{bmatrix} = \begin{bmatrix} 0 \\ 0 \end{bmatrix}. \]
    Since $A \pm \varepsilon B \geq 0$, by Lemma \ref{lemma1}, $\ker(A) \subseteq \ker(B)$; thus
    \[ B_{11} v = B_{12} v = B_{12}^{\ast} v = B_{22} v = 0. \] 
    Noting that $B_{11}$ and $B_{22}$ are Hermitian, by Lemma \ref{lemma2}, every $2 \times 2$ submatrix of $B$ is a (complex) multiple of $uu^{\ast}$. Hence, we may write
    \[ B = \begin{bmatrix} \alpha uu^{\ast} & \beta uu^{\ast} \\ \overline{\beta} uu^{\ast} & \gamma uu^{\ast} \end{bmatrix}
    = \begin{bmatrix} \alpha P & \beta P \\ \overline{\beta} P & \gamma P \end{bmatrix} \]
    for suitable $\alpha, \gamma \in \R$ and $\beta \in \C$. Putting the subscripts back, we conclude that each $B_k$ is a block matrix whose blocks are scalar multiples of $P_k$. 
    
    From the condition $\displaystyle \sum_{k=1}^{4} B_k = 0$, we obtain three homogeneous equations, namely,
    \begin{equation} \label{eq:linear_combo}
        \sum_{k=1}^{4} \alpha_k P_k = 0, \;\; \sum_{k=1}^{4} \beta_k P_k = 0, \;\; \sum_{k=1}^{4} \gamma_k P_k = 0.
    \end{equation}
    Since $\{P_1, P_2, P_3, P_4\}$ is linearly independent over both $\R$ and $\C$, all the coefficients appearing in \eqref{eq:linear_combo} are zero. We conclude that $B_k=0$ for each $k$; consequently, $\bfB=0$. Hence, $\bfA$ is an extreme point of $\DS_4(\mathcal{H}^4)$. As mentioned earlier, $\frac{1}{2}$ is an eigenvalue of (each) $A_1$; thus $\bfA$ is not a $\JF$-matrix over $\mathcal{H}^4$.
\end{example}

As outlined below, the above construction can be repeated in ${\cal S}^6$.

\begin{example}
    Consider the Euclidean Jordan algebra $\V = \mathcal{S}^3$. Let
    \begin{align*}
        v_1 &= \begin{bmatrix} 1/\sqrt{2} \\ 1/\sqrt{2} \\ 0 \end{bmatrix}\!, &
        v_2 &= \begin{bmatrix} -1/\sqrt{2} \\ 1/\sqrt{2} \\ 0 \end{bmatrix}\!, &
        v_3 &= \begin{bmatrix} 1/\sqrt{2} \\ 0 \\ 1/\sqrt{2} \end{bmatrix}\!, \\
        v_4 &= \begin{bmatrix} -1/\sqrt{2} \\ 0 \\ 1/\sqrt{2} \end{bmatrix}\!, &
        v_5 &= \begin{bmatrix} 0 \\ 1/\sqrt{2} \\ 1/\sqrt{2} \end{bmatrix}\!, &
        v_6 &= \begin{bmatrix} 0 \\ -1/\sqrt{2} \\ 1/\sqrt{2} \end{bmatrix}\!.
    \end{align*}
    Let $P_i = v_i v_i^{T}$ for $i = 1, \dots, 6$. Then each $P_i$ is a primitive idempotent of $\mathcal{S}^3$, $\{P_1, P_2, \dots, P_6\}$ is linearly independent, and $\sum_{i=1}^6 P_i = 2I$. As in Example \ref{eq: counterexample in H^4}, in $\mathcal{S}^6$, consider $A_i = \dfrac{1}{2} (P_i \oplus P_i)$ for $i = 1, 2, \dots, 6$. Then $\bfA \coloneq \big[ A_1, A_2, \dots, A_6 \big]$ is an extreme point of $\DS_6(\mathcal{S}^6)$ but is not in $\GJF_6(\mathcal{S}^6)$. This can be seen, as in the previous example, by using lemmas \ref{lemma1} and \ref{lemma2}. 
\end{example}

\section{Concluding remarks} 

Motivated by the works of Bapat \cite{bapat}, Alberti and Uhlmann \cite{alberti-uhlmann-paper, alberti-uhlmann}, and Gurvits \cite{gurvits1}, in this paper we studied several properties of $e$-doubly stochastic $\V$-matrices over hyperbolic systems. Various notions of doubly stochastic transformations (introduced in Section 2.3) and their properties will be presented in the forthcoming manuscript \cite{gowda-jeong-shukla2}.

\section*{Acknowledgments}

The work of Juyoung Jeong was supported by the National Research Foundation of Korea (NRF) grant funded by the Ministry of Science and ICT (No.\,RS-2026-25475143), and by the Global-LAMP Program of the NRF grant funded by the Ministry of Education (No.\,RS-2025-25441317).


\end{document}